\documentclass[leqno,11pt]{amsart}

\usepackage{geometry}




\usepackage[all]{xy} 


\usepackage{mathtools}
\usepackage{amsmath, amssymb, amsfonts, latexsym, mdwlist, amsthm}
\usepackage{subfig}
\usepackage{graphicx}
\usepackage{wrapfig}

\usepackage[bookmarks, colorlinks, breaklinks, pdftitle={},
pdfauthor={}]{hyperref}
\hypersetup{linkcolor=blue,citecolor=blue,filecolor=black,urlcolor=blue}




\usepackage{tikz}
\usetikzlibrary{calc,trees,positioning,arrows,chains,shapes.geometric,%
    decorations.pathreplacing,decorations.pathmorphing,shapes,%
    matrix,shapes.symbols}

\tikzset{
>=stealth',
  punktchain/.style={
    rectangle,
    rounded corners,
    draw=black, thick,
    minimum height=3em,
    text centered,
    on chain},
  line/.style={draw, thick, <-},
  element/.style={
    tape,
    top color=white,
    bottom color=blue!50!black!60!,
    minimum width=8em,
    draw=blue!40!black!90, very thick,
    text width=10em,
    minimum height=3.5em,
    text centered,
    on chain},
  every join/.style={->, thick,shorten >=1pt},
  decoration={brace},
  tuborg/.style={decorate},
  tubnode/.style={midway, right=2pt},
}

\usepackage{paralist}
\setdefaultenum{(a)}{(i)}{}{}
\usepackage{enumitem} 
\usepackage{graphicx}

\usetikzlibrary{patterns}

\renewcommand\_{^{}_}
\newcommand\To{\longrightarrow}
\newcommand{\Into}{\ensuremath{\lhook\joinrel\relbar\joinrel\rightarrow}}
\newcommand\PP{\mathbb P}

\newcommand\Q{\mathbb Q}
\newcommand\R{\mathbb R}

\newcommand\Z{\mathbb Z}
\newcommand\cA{\mathcal A}
\newcommand\cD{\mathcal D}
\newcommand\cH{\mathcal H}
\newcommand\cO{\mathcal O}
\newcommand\cI{\mathcal I}
\newcommand\cL{\mathcal L}
\newcommand\ch{\operatorname{ch}}
\newcommand\Hom{\operatorname{Hom}}
\newcommand\Ext{\operatorname{Ext}}
\newcommand\rk{\operatorname{rank}}
\newcommand\so{\operatorname{\Longrightarrow}}
\renewcommand\={\ =\ }
\newcommand\arXiv[1]{\href{http://arxiv.org/abs/#1}{arXiv:#1}}
\newcommand\mathAG[1]{\href{http://arxiv.org/abs/math/#1}{math.AG/#1}}

\newcommand\ii{\operatorname{(ii)}}

\makeatletter
\newtheorem*{rep@theorem}{\rep@title}
\newcommand{\newreptheorem}[2]{%
\newenvironment{rep#1}[1]{%
 \def\rep@title{#2 \ref{##1}}%
 \begin{rep@theorem}}%
 {\end{rep@theorem}}}
\makeatother

\newtheorem{Thm}{Theorem}[section]
\newreptheorem{Thm}{Theorem}
\newtheorem{Prop}[Thm]{Proposition}

\newtheorem{Lem}[Thm]{Lemma}

\newtheorem{Cor}[Thm]{Corollary}
\newreptheorem{Cor}{Corollary}
\newtheorem{Con}[Thm]{Conjecture}
\newreptheorem{Con}{Conjecture}

\newtheorem{thm-int}{Theorem}

\theoremstyle{definition}
\newtheorem{Def-s}[Thm]{Definition}
\newtheorem{Def}[Thm]{Definition}
\newtheorem{Rem}[Thm]{Remark}




\newcommand{\ignore}[1]{}

\begin{document}

	\title[An application of wall-crossing to {N}oether--{L}efschetz loci]
	{An application of wall-crossing to {N}oether--{L}efschetz loci\vspace{-2mm}}
\author[S. Feyzbakhsh and R. P. Thomas]{S. Feyzbakhsh and R. P. Thomas, with an appendix by C. Voisin\vspace{-1mm}}

	\begin{abstract}
	Consider a smooth projective 3-fold $X$ satisfying the  Bogomolov--Gieseker conjecture of Bayer-Macr\`{i}-Toda (such as $\PP^3$, the quintic threefold or an abelian threefold).
	
	Let $L$ be a line bundle supported on a very positive surface in $X$. If $c_1(L)$ is a primitive cohomology class then we show it has very negative square.
	\end{abstract}

\vspace*{-12mm}
	\maketitle	
\vspace{-5mm}

	\section{Introduction}
	Let $(X,\mathcal O(1))$ be a smooth polarised complex threefold.  For the strongest results
	we take $\cO(1)$ to be primitive. Set $H \coloneqq c_1(\mathcal O(1))$, though we do not require it to be effective.

Weak stability conditions on the derived category $\cD(X)$ were introduced by Bayer-Macr\`{i}-Toda \cite{BMT}. Together with their Bogomolov-Gieseker Conjecture \ref{conjecture} below they constitute the main technique for producing Bridgeland stability conditions on threefolds.

We only need certain weakenings of the conjecture described in \ref{BG1},\,\ref{BG2} below. They are known to hold for many threefolds  \cite{BMS, KosekiAb, Kosekinef, Li, Li.Fano, MP, Ma, ScQ} such as $\PP^3$ or the quintic 3-fold. We apply them to certain weak-semistable objects of $\cD(X)$ as we move
through the space of weak stability conditions. Combined with wall-crossing techniques this proves results about line bundles on surfaces in $|\cO(n)|$.
	

    \begin{Thm}\label{main}
Fix any irreducible divisor\footnote{$D$ may be singular. The results also apply to $D$ reducible, so long as $L$ is \emph{slope semistable} on $D$.} $D \subset X$ in $|\mathcal O(n)|$ and any line bundle $L$ on $D$ with $c_1(L)\ne0$ in $H^2(D,\Q)$ and $c_1(L).H=0$.
\begin{enumerate}[label=\textbf{\emph{(\Alph*)}}]
\item\label{A} If \emph{\ref{BG1}} holds on $X$ and $n \geq 4$ 
then $\displaystyle{L^2\ \leq\ \frac{-2n}{3}\,}.$\smallskip
\item\label{B} If \emph{\ref{BG2}} holds on $X$ and $n \geq 10$ then $L^2\ \leq\ -2n +4$.
\end{enumerate}
    \end{Thm}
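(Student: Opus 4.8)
The plan is to study the push-forward $E\coloneqq\iota_*L\in\cD(X)$ of $L$ along $\iota\colon D\hookrightarrow X$ inside the Bayer--Macr\`{i}--Toda weak stability conditions, and to run a wall-crossing argument along a well-chosen ray. First, Grothendieck--Riemann--Roch gives $\ch(E)=\iota_*\big(\ch(L)\cdot\operatorname{td}(N_{D/X})^{-1}\big)$ with $N_{D/X}=\cO_D(n)$; expanding this and using the hypothesis $c_1(L).H=0$ — which is exactly what makes the cross term $H\cdot\iota_*c_1(L)$ vanish — one finds $\ch_0(E)=0$, $\ch_1(E)=nH$, $H\ch_2(E)=-\tfrac{n^2}{2}H^3$ and $\ch_3(E)=\tfrac12 L^2+\tfrac{n^3}{6}H^3$. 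So an upper bound for $L^2$ is precisely an upper bound for $\ch_3(E)$, and the point is that $c_1(L)$ being primitive (hence $c_1(L)^2<0$ by the Hodge index theorem) and nonzero will feed into that bound.

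Next I would locate $E$ in tilt-stability. Since $\ch_0(E)=0$ and $H\ch_2(E)$ carries no term proportional to $c_1(L)$, the tilt-slope $\nu_{\alpha,\beta}(E)=-(\tfrac n2+\beta)$ is independent of $\alpha$; hence $E$ lies on the wall $\nu_{\alpha,\beta}(E)=0$ for \emph{every} $\alpha>0$ along the vertical line $\beta=-\tfrac n2$, and that is where I would work. A slope-semistable torsion-free sheaf on the surface $D$ push-forwards to a tilt-semistable object of $\cD(X)$ in the large-volume limit; as $L$ is slope-stable on the irreducible $D$ (or slope-semistable in the reducible case of the footnote), $E$ is $\nu_{\alpha,-n/2}$-semistable for $\alpha\gg0$. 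Let $\alpha_{\max}\ge0$ be the largest wall of $E$ on this line (with $\alpha_{\max}=0$ if there is none), so that $E$ is $\nu_{\alpha,-n/2}$-semistable for all $\alpha>\alpha_{\max}$; the Bogomolov--Gieseker-type bound \ref{BG1} (resp.\ \ref{BG2}) then applies to $E$, and substituting the Chern numbers above and letting $\alpha\downarrow\alpha_{\max}$ turns it into an inequality of the shape $L^2\le \tfrac{nH^3}{3}\big(\alpha_{\max}^2-\tfrac{n^2}{4}\big)$, up to the precise constant packaged into \ref{BG1} (resp.\ the sharper one in \ref{BG2}).

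The crux is then to bound $\alpha_{\max}$ strictly below $n/2$: a naive estimate only gives $\alpha_{\max}\le n/2$ and hence the vacuous $L^2\le0$. At the largest wall there is a short exact sequence $0\to F\to E\to G\to0$ in the tilted heart with $\nu_{\alpha_{\max},-n/2}(F)=\nu_{\alpha_{\max},-n/2}(G)=0$; both factors are $\nu$-semistable, so each satisfies the \emph{classical} Bogomolov inequality $\Delta\ge0$, each has $H^2\ch_1^{-n/2}\ge0$ (being in the tilted heart), and these two numbers add up to $nH^3$. Inserting the wall relation $H\ch_2^{-n/2}(F)=\tfrac{\alpha_{\max}^2}{2}H^3\ch_0(F)$ into $\Delta(F),\Delta(G)\ge0$ already yields $\alpha_{\max}\le n/2$, with equality only when $\{F,G\}$ has the numerical class of $\{\cO_X,\cO_X(-n)[1]\}$ (up to a $0$-dimensional correction) and $\Delta(F)=\Delta(G)=0$. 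I would then exclude these (near-)extremal destabilisers and push $\alpha_{\max}^2$ below $\tfrac{n^2}{4}-2$ in case \ref{A} (and further in case \ref{B}): a sub- or quotient object numerically equal to $\cO_X$ would give a nonzero map $\cO_X\to\iota_*L$, i.e.\ a section of $L$, which is impossible because $c_1(L).H=0$ with $c_1(L)\ne0$ forces $L$ and $L^{-1}$ to have no sections; the remaining candidate classes are ruled out by the analogous vanishing of $\Hom$'s and $\Ext$'s, computed by adjunction and Serre duality on $X$ and on $D$, using again $c_1(L)^2<0$ and the discreteness of the admissible Chern numbers of $F,G$. The thresholds $n\ge4$ (resp.\ $n\ge10$) enter exactly so that this gap estimate, together with the constant in \ref{BG1} (resp.\ \ref{BG2}), delivers $L^2\le-\tfrac{2n}{3}$ (resp.\ $L^2\le-2n+4$). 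The main obstacle is this last step: controlling \emph{every} destabiliser at the top wall — not merely the evident line-bundle ones — and extracting the quantitative gap $\tfrac{n^2}{4}-\alpha_{\max}^2$.
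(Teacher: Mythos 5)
Your skeleton is the paper's: push forward to get $\ch(\iota_*L)=\bigl(0,nH,\iota_*c_1(L)-\tfrac{n^2}2H^2,\tfrac12L^2+\tfrac{n^3}6H^3\bigr)$, work along the vertical line $b=-\tfrac n2$ which meets every wall, start from large-volume semistability of $\iota_*L$ (slope stability of $L$ on irreducible $D$), and turn the Bogomolov--Gieseker inequality along that ray into $L^2\le\tfrac{2nH^3}{3}\bigl(w_0-\tfrac{n^2}4\bigr)$, which is exactly your $L^2\le\tfrac{nH^3}{3}\bigl(\alpha_{\max}^2-\tfrac{n^2}4\bigr)$; your equality analysis at the top wall also matches Proposition \ref{prop.1}. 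The gap is precisely the step you flag as ``the main obstacle'', and your proposed mechanism does not fill it. A destabiliser that is ``numerically $\cO_X$'' is only a rank-one torsion-free sheaf with $\ch_1.H^2=0=\ch_2.H$ (e.g.\ $T\otimes I_W$ with $c_1(T).H^2=0$), not $\cO_X$, so ``a section of $L$'' is not what you get; and the decisive object is the \emph{quotient} side. The paper (Lemma \ref{3}) uses the classical Bogomolov inequality for $\cH^{-1}(F_2)$ together with the Hodge index theorem \emph{on $X$} to force $\cH^{-1}(F_2)\cong T(-n)\otimes I_C$ with $c_1(T)$ torsion and $\cH^0(F_2)$ zero-dimensional; relative Serre duality for $\iota$ then converts the destabilising surjection into a nonzero element of $\Hom_D(L,T|_D)$, whence $L\cong T|_D$ and $c_1(L)=0$ in $H^2(D,\Q)$. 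So the hypothesis enters as ``$c_1(L)\ne0$ in $\Q$-cohomology'', not via $c_1(L)^2<0$ (which is never used); your Hom/Ext vanishing is the contrapositive of this, but proving it for \emph{all} numerically admissible $F_1,F_2$ is exactly the Bogomolov-plus-Hodge-index and Serre-duality argument you have not supplied.

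More seriously for part \ref{B}: to reach $L^2\le-2n+4$ you must also exclude genuine walls with $\ch_2(F_1).H\in\{-1,-2\}$, and no vanishing of Hom's or Ext's of the kind you invoke is available there --- on $\PP^3$, for instance, a twisted ideal sheaf of a line is a perfectly plausible destabiliser, and nothing about $c_1(L)$ forbids maps to or from it a priori. The paper's Section \ref{last section} handles this with a second, independent wall-crossing argument: $F_1$ and the shifted derived dual of $F_2(n)$ are shown to be $\nu$-stable along entire rays via the minimal-denominator trick at $b=-\tfrac1{H^3}$ and $b=-n+\tfrac1{H^3}$ (Lemmas \ref{large 1}, \ref{large 2}); then \ref{BG2} is applied to them at special points $(b^*,w^*)$ to bound $\ch_3(F_1)$ from above and below (Propositions \ref{first}, \ref{second}), and combining these with $\ch_3(\iota_*L)=\tfrac12L^2+\tfrac{n^3}6H^3$ and the assumption $L^2>-2n+4$ yields a numerical contradiction for $n\ge10$. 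This second application of the conjectural inequality to the destabilisers themselves is the missing idea; without it your approach can at best recover the case-\ref{A} bound, not \ref{B}.
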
\medskip

See below for consequences of \emph{\ref{B}} on $\PP^3$, for the observation that it is sharp, and for stronger inequalities for line bundles $L=\cL|_D$ which are restricted from $X$.

It is the classes on $D$ which are \emph{not} restricted from $X$ that most interest us. One obvious source of such classes is the \emph{vanishing cycles} of $D$ --- the (co)homology classes of the Lagrangian two-spheres in $D$ that are contracted to nodes as we deform $D$ inside $|\cO(n)|$ to a nodal surface. These classes all have square $-2>\frac{-2n}3$ so Theorem \ref{main} tells us they can never be the class of a line bundle $L$ on $D$.

\begin{Cor} 
The vanishing cycles of $D\in|\cO(n)|$ have empty Noether-Lefschetz loci. \\
In fact any sum of $m$ disjoint vanishing cycles has empty Noether-Lefschetz locus when
\begin{itemize}
\item $X$ satisfies \emph{\ref{BG1}}, $n\ge4$ and $m\le\lfloor \frac{n-1}{3} \rfloor$, or
\item $X$ satisfies \emph{\ref{BG2}}, $n\ge10$ and $m\le n-3$.
\end{itemize}
\end{Cor}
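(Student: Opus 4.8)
The plan is to deduce the corollary directly from Theorem~\ref{main}, using only two standard facts from Lefschetz theory. First I would recall what is being claimed: for an integral class $\gamma$ on a smooth $D\in|\cO(n)|$, its Noether--Lefschetz locus is --- by the Lefschetz $(1,1)$-theorem --- the set of smooth members $D_t$ of $|\cO(n)|$ for which the flat transport $\gamma_t$ of $\gamma$ is the first Chern class of a line bundle on $D_t$, and it is empty precisely when no such $D_t$ exists. So it is enough to prove that $\gamma\coloneqq\sum_{i=1}^m\delta_i$, a sum of $m$ pairwise disjoint vanishing cycles, is never $c_1$ of a line bundle on a smooth divisor in $|\cO(n)|$ under the stated numerical hypotheses; the case $m=1$ is then the first assertion.

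Next I would record the two inputs. Each vanishing cycle $\delta_i$ is represented by an embedded Lagrangian $2$-sphere, so $\delta_i^2=-2$ (the standard $A_1$ computation), while disjointness of the spheres gives $\delta_i.\delta_j=0$ for $i\ne j$; hence $\gamma^2=\sum_i\delta_i^2=-2m\ne0$, and in particular $\gamma\ne0$ in $H^2(D,\Q)$. Moreover vanishing cycles lie in the vanishing cohomology $\ker\bigl(i_*\colon H^2(D)\to H^4(X)\bigr)$, which is orthogonal to the image of $i^*\colon H^2(X)\to H^2(D)$; since $H=i^*c_1(\cO(1))$ lies in that image, $\gamma.H=\sum_i\delta_i.H=0$. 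Finally, $\gamma^2=-2m$, the condition $\gamma.H=0$, and the non-vanishing of $\gamma$ are all preserved by the Gauss--Manin connection, because $H$ extends over the family as the restriction of a fixed class on $X$; so the transported class $\gamma_t$ satisfies the hypotheses of Theorem~\ref{main} on every smooth member $D_t$.

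Finally I would run the contradiction. If the Noether--Lefschetz locus of $\gamma$ were nonempty, there would be a smooth $D_t\in|\cO(n)|$ and a line bundle $L$ on $D_t$ with $c_1(L)=\gamma_t$, hence $c_1(L)\ne0$, $c_1(L).H=0$ and $c_1(L)^2=-2m$. In the case that $X$ satisfies \ref{BG1} and $n\ge4$, Theorem~\ref{main}\,\ref{A} yields $-2m\le-\tfrac{2n}{3}$, i.e.\ $3m\ge n$, contradicting $m\le\lfloor\tfrac{n-1}{3}\rfloor\le\tfrac{n-1}{3}$. In the case that $X$ satisfies \ref{BG2} and $n\ge10$, Theorem~\ref{main}\,\ref{B} yields $-2m\le-2n+4$, i.e.\ $m\ge n-2$, contradicting $m\le n-3$. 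Hence the locus is empty. I do not expect a genuine obstacle here: all of the analytic work sits in Theorem~\ref{main}, and the only points needing (routine) care are the self-intersection and primitivity of vanishing cycles, and their invariance under flat transport, recorded above.
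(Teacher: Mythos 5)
Your argument is correct and is essentially the paper's own: the corollary is deduced directly from Theorem \ref{main} using that a sum of $m$ disjoint vanishing cycles is a nonzero primitive class of square $-2m$ (invariant under flat transport), which violates the bounds $L^2\le-\tfrac{2n}{3}$ resp.\ $L^2\le-2n+4$ exactly when $m\le\lfloor\tfrac{n-1}{3}\rfloor$ resp.\ $m\le n-3$. The routine facts you verify (self-intersection $-2$, orthogonality to $H$, flat invariance) are precisely what the paper leaves implicit, so there is nothing further to add.
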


In other words, if we look for irreducible $D\in|\cO(n)|$ where our vanishing class has Hodge type $(1,1)$ we should find only singular $D$ on which our cohomology class has ceased to exist (or, considered as a homology class, some part of it has vanished).

So not all classes in $H^2(D,\Z)$ become $(1,1)$ under some deformation inside $|\cO(n)|$, even though those which do generate $H^2(D,\Z)$ over $\Z$ by \cite[p19]{Voi}.

\subsection*{Method} To prove Theorem \ref{main} we move in a space of weak stability conditions on $\cD(X)$, and show that if $L^2>-2n/3$ then the Bogomolov-Gieseker inequality \ref{BG1} implies $\iota_*L$ is unstable in certain regions, where $\iota\colon D\hookrightarrow X$ is the inclusion. We find the wall on which it becomes unstable, where we show it is destabilised by a map from $\iota_*L$ to $T(-n)[1]$, for some line bundle $T$ with torsion $c_1(T)$. Thus by relative Serre duality for the map $\iota$, 
\begin{equation}\label{ending}
\Hom\_X(\iota_*L , T(-n)[1])\ =\ \Hom\_D(L, T|_D)\ \neq\ 0,
\end{equation}
which means $L^* \otimes T|_D$ is effective.\footnote{This also shows that if $c_1(L)$ is torsion then it lifts to $X$. For $D$ smooth this follows already from the Lefschetz hyperplane theorem: $X$ is made from $D$ by attaching $(n\ge3)$-cells, so $H^3(X,D)$ is torsion free.} Since $L.H=0$ this implies $L=T|_D$, so, in particular $c_1(L)=0$ in $H^2(D,\Q)$.

\subsection*{Projective space}
There are two different ways to saturate the inequality \emph{\ref{B}} on $\PP^3$ and hence deduce it is sharp.

Firstly, we can take $D$ to contain disjoint lines $L_1,\,L_2\subset\PP^3$. Their normal bundles inside $D$ are $\cO_{\PP^1}(-n+2)$, so $L:=\cO_D(L_1-L_2)$ satisfies $L.H=0$ and $L^2=-2n+4$.

Secondly, if an irreducible $D\in|\cO_{\PP^3}(n)|,\,n\ge10$, contains
disjoint degree $d\ne1$ \emph{plane curves} $C_1,\,C_2$, then \emph{\ref{B}} applied to $\cO_D(C_1-C_2)$ proves $n\ge d+2$. Thus \emph{\ref{B}} is saturated if $n=d+2$, and it is indeed easy to construct $D\supset C_1,\,C_2$ of any degree $n\ge d+2$.

More generally if $D\in|\cO_{\PP^3}(n)|$ contains disjoint degree $d$ curves $C_1,\,C_2$ of genus $g_1,\,g_2$ then \emph{\ref{B}} applied to $\cO_D(C_1-C_2)$ gives $g_1+g_2\le(n-4)(d-1)$ for $n\ge10$.

\subsection*{Line bundles restricted from $X$}
When $L=\cL|_D$ extends to a line bundle $\cL$ on $X$  with $\cL.H^2=0$ then \emph{\ref{A}} is trivial on \emph{any} $X$. In fact $L^2=n\cL^2.H$ is divisible by $n$ and $<0$ by the Hodge index theorem, so
\begin{equation}\label{n}
L^2\ \le\ -n.
\end{equation}
But then if \ref{BG2} holds, \emph{\ref{B}} gives $\cL^2.nH\le-2n+4$, i.e. any line bundle $\cL$ on $X$ satisfies
$$
\cL.H^2\=0 \quad\so\quad \cL^2.H\ \le\ -2.
$$
This appears to be nontrivial, but not very (the Hodge index theorem already gives $\le-1$). Plugging it back into the argument that gave \eqref{n} strengthens it to
\begin{equation}\label{2n}
L^2\ \le\ -2n.
\end{equation}

\subsection*{Acknowledgements} It is an honour to dedicate this paper to the memory of Sir Michael Atiyah. His influence on our lives and mathematics is enormous and ongoing. It is pleasurable exercise to trace back all of the maths in this paper to his work.

We are most grateful to Claire Voisin. On seeing our paper she immediately saw how to produce closely related results on $\PP^3$ by more classical methods, drafting Appendix \ref{app}. We also thank Arend Bayer, Paolo Cascini, Chunyi Li, Davesh Maulik, Ivan Smith, Yukinobu Toda and a thorough referee for useful comments. Seven years ago Toda \cite{TodaBG} pioneered the techniques used in this paper to prove the famous OSV conjecture from physics; see the companion paper \cite{FT} for a comparison of our methods and results. The second author acknowledges support from EPSRC grant EP/R013349/1.
\setcounter{tocdepth}{1}
\tableofcontents \vspace{-1cm}
	
\section{Weak stability conditions}
In this section, we review the notion of a weak stability condition on the derived category of coherent sheaves on a smooth threefold. The main references are \cite{BMT,BMS}. 

Let $(X,\mathcal O(1))$ be a smooth polarised complex threefold, and $H = c_1(\mathcal O(1))$. Denote the bounded derived category of coherent sheaves on $X$ by $\cD(X)$ and its Grothendieck group by $K(X):=K(\cD(X))$. We define the $\mu\_H$-slope of a coherent sheaf $E$ on $X$ to be
$$
\mu\_H(E)\ :=\ \left\{\!\!\begin{array}{cc} \frac{\ch_1(E).H^2}{\ch_0(E)H^3} & \text{if }\ch_0(E)\ne0, \\
+\infty & \text{if }\ch_0(E)=0. \end{array}\right.
$$
Associated to this slope every sheaf $E$ has a Harder-Narasimhan filtration. Its graded pieces have slopes whose maximum we denote by $\mu_H^+(E)$ and minimum by $\mu_H^-(E)$.

For any $b \in \mathbb{R}$, let $\cA(b)\subset\cD(X)$ denote the abelian category of complexes
	\begin{equation}\label{Abdef}
	\mathcal{A}(b)\ =\ \big\{E^{-1} \xrightarrow{\,d\,} E^0 \ \colon\ \mu_H^{+}(\ker d) \leq b \,,\  \mu_H^{-}(\text{coker}\; d) > b \big\}. 
	\end{equation}
Then $\cA(b)$ is the heart of a t-structure on $\cD(X)$ by \cite[Lemma 6.1]{Br}. Let $w\in\R\setminus\{0\}$. On $\cA(b)$ we have the slope function\footnote{This is called $\nu\_{b,w}$ in \cite[Equation 7]{BMT}, but we reserve $\nu\_{b,w}$ for its rescaling \eqref{scale}.}
\begin{equation*}
N_{b,w}(E)\ :=\ \left\{\!\!\begin{array}{cc} \frac{w\ch_2^{bH}(E).H - \frac{1}{6}w^3\ch_0(E)H^3}{w^2\ch_1^{bH}(E).H^2} & \text{if }\ch_1^{bH}(E).H^2\ne0, \\
+\infty & \text{if }\ch_1^{bH}(E).H^2=0, \end{array}\right.
\end{equation*}
where $\ch^{bH}(E):=\ch(E)e^{-bH}$. When $w>0$ this defines a Harder-Narasimhan filtration on $\cA(b)$ by \cite[Lemma 3.2.4]{BMT}. It will be convenient to replace this with
\begin{equation}\label{scale}
\nu\_{b,w}\ :=\ \sigma N_{b,\sigma}+b, \quad\text{where }\sigma:=\sqrt{6(w-b^2/2)},
\end{equation}
for $w>b^2/2$. This is because
\begin{equation}\label{noo}
\nu\_{b,w}(E)\ =\ \left\{\!\!\begin{array}{cc} \frac{\ch_2(E).H - w\ch_0(E)H^3}{\ch_1^{bH}(E).H^2}
 & \text{if }\ch_1^{bH}(E).H^2\ne0, \\
+\infty & \text{if }\ch_1^{bH}(E).H^2=0 \end{array}\right.
\end{equation}
has a denominator that is linear in $b$ and numerator linear in $w$, so the walls of $\nu\_{b,w}$-instability will turn out to be \emph{linear}; see Proposition \ref{locally finite set of walls}. Note that if $\ch_i(E).H^{n-i} = 0$ for $i=0,1,2$, the slope $\nu\_{b,w}(E)$ is defined by \eqref{noo} to be $+\infty$. Since \eqref{scale} only rescales and adds a constant, it defines the same Harder-Narasimhan filtration as $N_{b,\sigma}$, so it too defines a weak stability condition on $\cA(b)$. 

\begin{Def}
Fix $w>\frac{b^2}2$. We say $E\in\cD(X)$ is $\nu\_{b,w}$-(semi)stable if and only if
\begin{itemize}
\item $E[k]\in\cA(b)$ for some $k\in\Z$, and
\item $\nu\_{b,w}(E[k])\ (\le)\ \nu\_{b,w}(F)$ for all non-trivial quotients $E[k]\to\hspace{-3mm}\to F$ in $\cA(b)$.
\end{itemize}
Here $(\le)$ denotes $<$ for stability and $\le$ for semistability.
\end{Def}

\begin{Rem}\label{heart}
Given $(b,w) \in \mathbb{R}^2$ with $w> \frac{b^2}{2}$, the argument in \cite[Propostion 5.3]{Br.stbaility} describes $\cA(b)$. It is generated by the $\nu_{b,w}$-stable two-term complexes $E = \{E^{-1} \to E^0\}$ in $\cD(X)$ satisfying the following conditions on the denominator and numerator of $\nu\_{b,w}$ \eqref{noo}:
\begin{enumerate}
    \item $\ch_1^{bH}(E).H^2 \geq 0$, and
    \item $\ch_2(E).H - w\ch_0(E)H^3 \geq 0$ if $\ch_1^{bH}(E).H^2 = 0$. 
\end{enumerate}
That is, $\cA(b)$ is the extension-closure of the set of these complexes. 
\end{Rem}

\section{Bogomolov-Gieseker inequality}
We recall the conjectural strong Bogomolov-Gieseker inequality of \cite[Conjecture 1.3.1]{BMT}, rephrased in terms of the rescaling \eqref{scale}.

\begin{Con}  \label{conjecture}
	For $\nu\_{b,w}$-semistable $E\in\cA(b)$ with $\ch_2^{bH}(E).H =\big(w -\frac{b^2}2\big)\ch_0(E)H^3$, 
$$
		\ch_3^{bH}(E)\ \leq\ \bigg(\frac{w}{3} - \frac{b^2}{6}\bigg) \ch_1^{bH}(E).H^2.
		$$ 
	\end{Con}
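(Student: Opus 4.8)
This is the strong Bogomolov--Gieseker conjecture of \cite{BMT} itself, which is open for a general polarised threefold; the present paper assumes it (or the weakenings \ref{BG1}, \ref{BG2}) rather than proving it, so what follows is a plan for the cases in which it \emph{is} known: $\PP^3$, the quintic, abelian threefolds, and Fano threefolds of Picard rank one. In all of them the backbone is a \emph{second tilt}. Starting from the heart $\cA(b)$ and the weak stability function $\nu\_{b,w}$ constructed above, I would tilt $\cA(b)$ once more at the $\nu\_{b,w}$-slope $0$ to produce a further heart $\mathcal B = \mathcal B(b,w)\subset\cD(X)$, and equip it with a third slope function built from $\ch_3^{bH}$. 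In this language Conjecture~\ref{conjecture} becomes a positivity statement for that slope on the $\mathcal B$-semistable objects --- equivalently, as \cite{BMT} show, the assertion that $\mathcal B$ together with the associated central charge is a genuine Bridgeland stability condition on $\cD(X)$. So the problem is reduced to controlling $\ch_3^{bH}$ for $\nu\_{b,w}$-semistable objects lying on the locus in the conjecture's hypothesis, $\ch_2^{bH}(E).H=(w-\tfrac{b^2}{2})\ch_0(E)H^3$.

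The plan for that is an induction over the (locally finite, and after the rescaling \eqref{scale} linear) walls of $\nu\_{b,w}$-stability. Crossing a wall alters an object's Jordan--H\"older factors but not the finite set of numerical types one must check, so it is enough to work on such a wall, where the relevant destabilising objects are again $\nu\_{b,w}$-semistable. At that point I would invoke the \emph{unconditional} quadratic Bogomolov--Gieseker inequality for tilt-semistable objects, $\Delta\_H(E):=(\ch_1(E).H^2)^2-2(\ch_0(E)H^3)(\ch_2(E).H)\ge0$, together with its strengthening on the $\nu\_{b,w}$-support, which pins the discriminant of each factor down to finitely many small values; only finitely many numerical possibilities for the dangerous objects then remain. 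Establishing those quadratic inequalities is the formal part --- classical Bogomolov--Gieseker on surfaces plus Hodge-index bookkeeping.

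The real content, and the main obstacle, is ruling out each of the finitely many surviving numerical types, and this is exactly where the geometry of $X$ has to enter --- there is no doing it from Chern-character manipulations alone. On $\PP^3$ and on Fano threefolds of Picard rank one one uses an exceptional collection, Beilinson-type resolutions and vanishing theorems to show that no honest object realises the dangerous character on the wall (cf.\ \cite{Ma, Li.Fano}). On an abelian threefold one instead exploits that Fourier--Mukai transforms are exact autoequivalences of $\cD(X)$ permuting the weak stability conditions, transporting a putative extremal object to a region where $\mathrm{Pic}(X)$ and $H^*(X)$ are too rigid for it to survive (cf.\ \cite{MP, BMS}); on the quintic, \cite{Li} combines the $\PP^4$ exceptional collection with a careful study of restriction to the hypersurface. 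In every case the wall-crossing induction and the quadratic inequalities are routine, and the difficulty is the final classification of $\nu\_{b,w}$-stable objects on $\ch_2^{bH}(E).H=(w-\tfrac{b^2}{2})\ch_0(E)H^3$ and the verification that $\ch_3^{bH}$ never exceeds the stated bound for any of them.
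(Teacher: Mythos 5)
There is nothing in the paper to compare your argument against: the statement is Conjecture~\ref{conjecture}, recalled verbatim from \cite[Conjecture~1.3.1]{BMT}, and the paper never proves it --- it only \emph{assumes} the weakenings \ref{BG1}, \ref{BG2} and cites the cases in which the conjecture is known (the only proof in that section, Theorem~\ref{Li}, merely checks that the parameters appearing in \ref{BG1}, \ref{BG2} satisfy Li's hypothesis \eqref{in for b, w} on the quintic). You correctly identify this status, but your opening claim that the conjecture is ``open for a general polarised threefold'' is wrong in an important way: it is known to be \emph{false} in general by Schmidt's counterexample \cite{Sc} (the blow-up of $\PP^3$ at a point), which the paper points out explicitly. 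Any purported general proof strategy must therefore break somewhere, and your outline gives no indication of where the geometry of the special threefolds enters in a way that is unavailable in Schmidt's example.

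Beyond that, what you write is a survey-level plan rather than a proof: every substantive step (``ruling out each of the finitely many surviving numerical types'') is deferred to \cite{Ma, Li.Fano, Li, MP, BMS}, and the scheme you describe does not accurately reflect how several of those papers actually argue. The second tilt and the reformulation as the existence of a Bridgeland stability condition are indeed in \cite{BMT, BMS}, but Li's quintic proof \cite{Li} proceeds by restricting to divisors and proving a stronger Clifford-type/support bound, not by the wall-crossing induction you sketch; the abelian case in \cite{BMS} uses \'etale multiplication maps to rescale the discriminant rather than a finite classification of characters on a wall; and Macr\`i's $\PP^3$ argument is a direct estimate for tilt-semistable objects via the exceptional collection, not an induction over walls. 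So as a standalone proof attempt there is a genuine gap --- indeed the whole content is missing --- although as a description of the literature it is serviceable. For the purposes of this paper the correct statement is simply that Conjecture~\ref{conjecture} is an assumption, verified in the references listed after it, and needed only in the special cases \ref{BG1}, \ref{BG2}.
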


Although this conjecture is known not to hold for all classes on all threefolds \cite{Sc}, it is possible it always holds for objects of the classes $\ch(\iota_*L)$ that we consider. In Theorem \ref{main} we only need the conjecture in special cases, namely

\begin{enumerate}[label=\textbf{(BG\arabic*)}]
\item\label{BG1} Conjecture \ref{conjecture} holds for sheaves of class $\ch(\iota_*L)$ and stability parameters
$(-\frac n2,w)$ for any $w>\frac{n^2}4 -\frac{1}{H^3}$ for fixed $n\geq 4$.
\item\label{BG2} Conjecture \ref{conjecture} holds for both
	\end{enumerate}
\begin{itemize}
\item sheaves of class $\ch(\iota_*L)$ and stability parameters $(-\frac n2,w)$ for any $w>\frac{n^2}4 -\frac{3}{H^3}$ and fixed $n \geq 10$, and
\item torsion-free sheaves $F$ with $\ch_0(F)=1,\ \ch_1(F).H^2= 0,\ \ch_2(F).H\in\{ -1, -2\}$, and stability parameters $(b^*, w^*)$ with $b^* = \ch_2(F).H - \frac{1}{2H^3},\ w^* = (b^*)^2 + \frac{\ch_2(F).H}{H^3}$\,.
	\end{itemize}
Conjecture \ref{conjecture} is a special case of \cite[Conjecture 4.1]{BMS}, which has now been proved for
\begin{itemize}
\item $X$ is projective space $\mathbb{P}^3$ \cite{Ma}, the quadric threefold \cite{ScQ} or, more generally, any Fano threefold of Picard rank one \cite{Li.Fano},
\item $X$ an abelian threefold \cite{MP}, a Calabi-Yau threefold of abelian type \cite{BMS}, a Kummer threefold \cite{BMS}, or a product of an abelian variety and $\PP^m$ \cite{KosekiAb},
\item $X$ with nef tangent bundle \cite{Kosekinef}, and
\item $X$ is a quintic threefold and $(b,w)$ are described below \cite{Li}.
\end{itemize}

\begin{Thm} \cite[Theorem 2.8]{Li} \label{Li}
Let $X$ be a smooth quintic threefold. 
Then Conjecture \ref{conjecture} is true for $(b,w)$ satisfying 
\begin{equation}\label{in for b, w}
w\ >\ \frac{1}{2} b^2 + \frac{1}{2}\big(b - \lfloor b \rfloor\big)\big (\lfloor b \rfloor - b +1\big). 
\end{equation}
In particular \emph{\ref{BG1}} and \emph{\ref{BG2}} hold on $X$.
\end{Thm}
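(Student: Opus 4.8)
The first sentence of the statement is quoted verbatim from \cite[Theorem 2.8]{Li}, so the only thing to prove is the ``in particular'' clause, and for that the plan is simply to check that every pair $(b,w)$ named in \ref{BG1} and \ref{BG2} satisfies the inequality \eqref{in for b, w} (for the quintic, $H^3=5$ throughout). Write $\beta(b):=\tfrac12(b-\lfloor b\rfloor)(\lfloor b\rfloor-b+1)$ for the correction term on the right-hand side of \eqref{in for b, w}. Since $b-\lfloor b\rfloor\in[0,1)$ and $\lfloor b\rfloor-b+1=1-(b-\lfloor b\rfloor)$ are nonnegative with sum $1$, their product is at most $\tfrac14$, so $\beta(b)\le\tfrac18$ for every $b$; hence it suffices to verify the cruder bound $w>\tfrac12 b^2+\tfrac18$ in each case. (One could instead split into $n$ even, where $\beta(-n/2)=0$, and $n$ odd, where $\beta(-n/2)=\tfrac18$, but the uniform bound makes this unnecessary.)

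For \ref{BG1} one has $b=-\tfrac n2$ and $w>\tfrac{n^2}4-\tfrac15$ with $n\ge4$, so $\tfrac12 b^2+\tfrac18=\tfrac{n^2}8+\tfrac18$ and the inequality $\tfrac{n^2}4-\tfrac15\ge\tfrac{n^2}8+\tfrac18$ rearranges to $n^2\ge\tfrac{13}5$, which holds already for $n\ge2$. For the first bullet of \ref{BG2} the base $b=-\tfrac n2$ is the same but $w>\tfrac{n^2}4-\tfrac35$ with $n\ge10$, and $\tfrac{n^2}4-\tfrac35\ge\tfrac{n^2}8+\tfrac18$ rearranges to $n^2\ge\tfrac{29}5$, again clear.

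For the second bullet of \ref{BG2} there are exactly two cases. If $\ch_2(F).H=-1$ then $b^*=-1-\tfrac1{10}=-\tfrac{11}{10}$ and $w^*=\tfrac{121}{100}-\tfrac15=\tfrac{101}{100}$, and $\tfrac12(b^*)^2+\tfrac18=\tfrac{121}{200}+\tfrac18<\tfrac{101}{100}=w^*$. If $\ch_2(F).H=-2$ then $b^*=-\tfrac{21}{10}$ and $w^*=\tfrac{441}{100}-\tfrac25=\tfrac{401}{100}$, and $\tfrac12(b^*)^2+\tfrac18=\tfrac{441}{200}+\tfrac18<\tfrac{401}{100}=w^*$. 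Thus \eqref{in for b, w} holds in every instance, which establishes the claim.

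There is no real obstruction here: granted Li's theorem, the deduction is a finite and elementary arithmetic verification; the only mild subtlety is the floor-function term $\beta(b)$, which is dealt with once and for all by the bound $\beta(b)\le\tfrac18$, so that no case distinction on the parity of $n$ is needed.
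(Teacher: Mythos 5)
Your proposal is correct and takes essentially the same approach as the paper: quote Li's theorem and then verify the inequality \eqref{in for b, w} for each of the parameter sets appearing in \ref{BG1} and \ref{BG2}. The only differences are cosmetic — you specialize to $H^3=5$ (legitimate, since Li's result is for the hyperplane polarization of the quintic) and use the uniform bound $\tfrac12(b-\lfloor b\rfloor)(\lfloor b\rfloor-b+1)\le\tfrac18$ with explicit numerics for the second bullet of \ref{BG2}, whereas the paper keeps $H^3$ symbolic and handles that bullet via the algebraic inequality $(2\epsilon-x)(\epsilon-x)+(\epsilon-1)x>0$.
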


\begin{proof}
Using the notation $(\alpha,\beta)$ for $(w,b)$, \cite[Theorem 2.8]{Li} proves that \eqref{in for b, w} implies \cite[Conjecture 4.1]{BMS}. This gives Conjecture \ref{conjecture},
so we are left with checking that the parameters in \ref{BG1}, \ref{BG2} satisfy  \eqref{in for b, w}.

For \ref{BG1} we take $n\ge4,\ b=-\frac n2$ and $w>\frac{n^2}4 -\frac{1}{H^3}$. Then certainly $n^2>\frac8{H^3}+1$, which can be rearranged to give
\begin{equation}\label{bwn}
\frac{n^2}{8} + \frac{1}{8}\ <\ \frac{n^2}{4} - \frac{1}{H^3}\ <\ w.
\end{equation}
But since $b=-\frac n2$ we have
\begin{equation}\label{bn2}
\frac{1}{2} b^2 + \frac{1}{2}\big(b - \lfloor b \rfloor\big)\big (\lfloor b \rfloor - b +1\big)\ \le\ \frac{n^2}8+\frac18
\end{equation}
which by \eqref{bwn} gives \eqref{in for b, w}.

For \ref{BG2} we take $n\ge10,\ b=-\frac n2$ and $w>\frac{n^2}4 -\frac3{H^3}$. Then certainly 
$n^2>\frac{24}{H^3}+1$, which can be rearranged to give
$$
\frac{n^2}{8} + \frac{1}{8}\ <\ \frac{n^2}{4} - \frac3{H^3}\ <\ w.
$$
By \eqref{bn2} this gives \eqref{in for b, w}.

For the second part of \ref{BG2}, use the obvious inequality  $(2\epsilon-x)(\epsilon-x)+(\epsilon-1)x>0$ for $\epsilon\in\{1,2\}$ and $x\in(0,1)$. By rearranging this is equivalent to
$$
\frac12\left(-\epsilon-\frac x2\right)^2-\epsilon x\ >\ \frac x4\left(1-\frac x2\right).
$$
Substituting in $\epsilon=-\ch_2(F).H,\ x=\frac1{H^3}$ and $b^* = \ch_2(F).H - \frac{1}{2H^3}$ makes this
$$
\frac{(b^*)^2}2+\frac{\ch_2(F).H}{H^3}\ >\ \frac1{4H^3}\left(1-\frac1{2H^3}\right).
$$
For $w^* = (b^*)^2 + \frac{\ch_2(F).H}{H^3}$ this is 
$$
w^*-\frac{(b^*)^2}2\ >\ \frac12\left(1-\frac1{2H^3}\right)\frac1{2H^3}\ =\ \frac12\big(b^* - \lfloor b^* \rfloor\big)\big(\lfloor b^*\rfloor-b^*+1\big),
$$
i.e. the inequality \eqref{in for b, w} for $(b^*,w^*)$ as required.
\end{proof}

\section{Wall and chamber structure}
In Figure \ref{projetcion} we plot the $(b,w)$-plane simultaneously with the image of the projection map
\begin{eqnarray*}
	\Pi\colon\ K(X) \setminus \big\{E \colon \ch_0(E) = 0\big\}\! &\longrightarrow& \R^2, \\
	E &\ensuremath{\shortmid\joinrel\relbar\joinrel\rightarrow}& \!\!\bigg(\frac{\ch_1(E).H^2}{\ch_0(E)H^3}\,,\, \frac{\ch_2(E).H}{\ch_0(E)H^3}\bigg).
\end{eqnarray*}
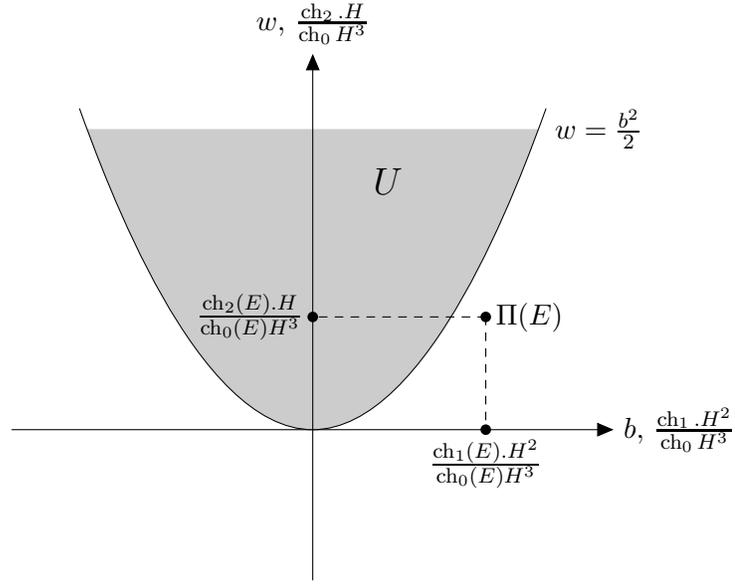
\begin{figure}[h]
	\begin{centering}
		\definecolor{zzttqq}{rgb}{0.27,0.27,0.27}
		\definecolor{qqqqff}{rgb}{0.33,0.33,0.33}
		\definecolor{uququq}{rgb}{0.25,0.25,0.25}
		\definecolor{xdxdff}{rgb}{0.66,0.66,0.66}
		
		\begin{tikzpicture}[line cap=round,line join=round,>=triangle 45,x=1.0cm,y=1.0cm]
		
		\draw[->,color=black] (-4,0) -- (4,0);
		\draw  (4, 0) node [right ] {$b,\,\frac{\ch_1.H^2}{\ch_0H^3}$};


		\fill [fill=gray!40!white] (0,0) parabola (3,4) parabola [bend at end] (-3,4) parabola [bend at end] (0,0);
		
		\draw  (0,0) parabola (3.1,4.27); 
		\draw  (0,0) parabola (-3.1,4.27); 
		\draw  (3.8 , 3.6) node [above] {$w= \frac{b^2}{2}$};
		
		

		\draw[->,color=black] (0,-2) -- (0,5);
		\draw  (0, 5) node [above ] {$w,\,\frac{\ch_2.H}{\ch_0H^3}$};

		
		\draw [dashed, color=black] (2.3,1.5) -- (2.3,0);
		\draw [dashed, color=black] (2.3, 1.5) -- (0, 1.5);
		
		\draw  (2.3, 1.5) node [right] {$\Pi(E)$};
		\draw  (1, 3) node [above] {\Large{$U$}};
		\draw  (0, 1.5) node [left] {$\frac{\ch_2(E).H}{\ch_0(E)H^3}$};
		\draw  (2.3 , 0) node [below] {$\frac{\ch_1(E).H^2}{\ch_0(E)H^3}$};
		\begin{scriptsize}
		\fill (0, 1.5) circle (2pt);
		\fill (2.3,0) circle (2pt);
		\fill (2.3,1.5) circle (2pt);
		
		
		\end{scriptsize}
		
		\end{tikzpicture}
		
		\caption{$(b,w)$-plane and the projection $\Pi(E)$}
		
		\label{projetcion}
		
	\end{centering}
\end{figure}

\noindent Note that for any weak stability condition $\nu\_{b,w}$, the pair $(b,w)$ is in the shaded open subset
\begin{equation}\label{Udef}
U \,\coloneqq \,\left\{(b,w) \in \mathbb{R}^2 \colon w > \frac{b^2}{2}  \right\}.
\end{equation}
Conversely, the image $\Pi(E)$ of $\nu\_{b,w}$-semistable objects $E$ with $\ch_0(E)\ne0$ is \emph{outside} $U$,
$$
\bigg( \frac{\ch_1(E).H^2}{\ch_0(E)H^3} \bigg)^{\!\!2} - 2\,\frac{\ch_2(E).H}{\ch_0(E)H^3}\ \geq\ 0,
$$
by the classical Bogomolov-Gieseker-type inequality of \cite[Theorem 3.5]{BMS},
\begin{equation}\label{discr}
	\Delta_H(E)\ :=\  \big(\!\ch_1(E).H^2\big)^2 -2 (\ch_0(E)H^3)(\ch_2(E).H)\ \ge\ 0,
\end{equation}
for the $H$-discriminant $\Delta_H(E)$ of a $\nu\_{b,w}$-semistable object $E$.\footnote{\cite[Theorem 3.5]{BMS} state \eqref{discr} with $\ch$ replaced by $\ch^{bH}$, but the result is still $\Delta_H(E)$. We use the stronger Bogomolov inequality $\ch_1(E)^2.H-2\ch_0(E)(\ch_2(E).H)\ge0$ for $\mu\_H$-semistable sheaves in \eqref{condition 2}.}

\begin{Prop}[\textbf{Wall and chamber structure}]\label{locally finite set of walls}
	Fix an object $E \in \mathcal{D}(X)$ such that the vector $\left(\ch_0(E), \ch_1(E).H^2, \ch_2(E).H\right) \neq 0$ is non-zero. There exists a set of lines $\{\ell_i\}_{i \in I}$ in $\mathbb{R}^2$ such that the segments $\ell_i\cap U$ (called ``\emph{walls}") are locally finite and satisfy 
	\begin{itemize*}
	    \item[\emph{(a)}] Any line $\ell_i$ passes through the point $\Pi(E)$ if $\ch_0(E) \neq 0$, or has fixed slope $\frac{\ch_2(E).H}{\ch_1(E).H^2}$ if $\ch_0(E) = 0$. 
	    
		\item[\emph{(b)}] The $\nu\_{b,w}$-(semi)stability of $E$ is unchanged as $(b,w)$ varies within any connected component (called a ``\emph{chamber}") of $U \setminus \bigcup_{i \in I}\ell_i$.
		
		\item[\emph{(c)}] For any wall $\ell_i\cap U$ there is $k_i \in \mathbb{Z}$ and a map $f\colon F\to E[k_i]$ in $\cD(X)$ such that
\begin{itemize}
\item for any $(b,w) \in \ell_i \cap U$, the objects $E[k_i],\,F$ lies in the heart $\cA(b)$,
\item $E[k_i]$ is $\nu\_{b,w}$-semistable with $\nu\_{b,w}(E)=\nu\_{b,w}(F)=\,\mathrm{slope}\,(\ell_i)$ constant on $\ell_i \cap U$, and
\item $f$ is an injection $F\subset E[k_i]$ in $\cA(b)$ which strictly destabilises $E[k_i]$ for $(b,w)$ in one of the two chambers adjacent to the wall $\ell_i$.

\end{itemize} 
	\end{itemize*} 
	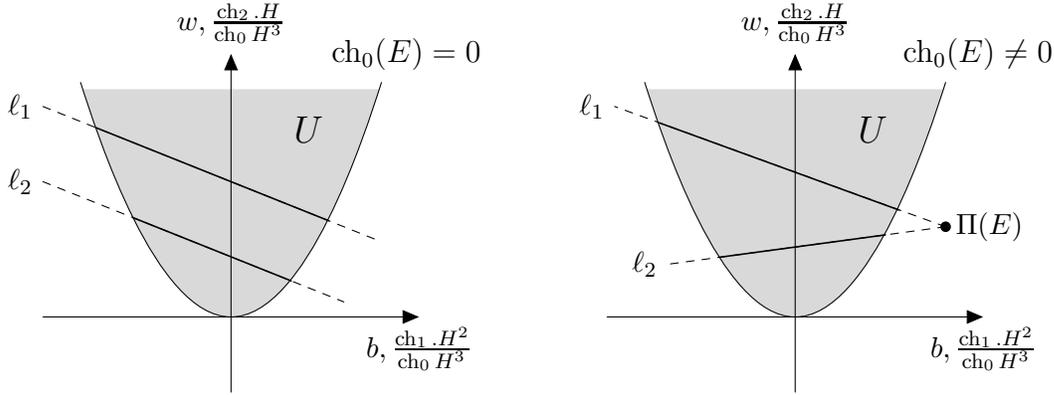
\begin{figure} [h]
	\begin{centering}
		
		\begin{tikzpicture}[line cap=round,line join=round,>=triangle 45,x=1.0cm,y=1.0cm]
	
		\draw[->,color=black] (-10.5,0) -- (-5.5,0);
		\draw[->,color=black] (-3,0) -- (2,0);
		
		\fill [fill=gray!30!white] (-0.5,0) parabola (1.47, 3.03) parabola [bend at end] (-2.47,3.03) parabola [bend at end] (-0.5,0);
		
		\fill [fill=gray!30!white] (-8,0) parabola (-6.03, 3.03) parabola [bend at end] (-9.97,3.03) parabola [bend at end] (-8,0);

		\draw[->,color=black] (-8,-1) -- (-8,3.5);
		\draw[->,color=black] (-0.5,-1) -- (-0.5,3.5);

		\draw [] (-0.5,0) parabola (1.5,3.12); 
		\draw [] (-0.5,0) parabola (-2.5,3.12); 
		\draw [] (-8,0) parabola (-10,3.12); 
		\draw [] (-8,0) parabola (-6,3.12);

		\draw[color=black, dashed] (-10.5,2.8) -- (-6,1);
		\draw[color=black, dashed] (-10.5,1.8) -- (-6.5,0.2);

       \draw[color=black,semithick] (-9.8,2.52) -- (-6.7,1.28);
       \draw[color=black,semithick] (-9.3,1.32) -- (-7.2,.48);
       
		\draw (-10.5,1.8) node [left] {$\ell_2$};
		\draw (-10.5,2.8) node [left] {$\ell_1$};

		\draw (.8,3.5) node [right] {\large{$\ch_0(E) \neq 0$}};
		\draw (-6.8,3.5) node [right] {\large{$\ch_0(E) = 0$}};
		\draw (-5.5,0) node [below] {$b, \frac{\ch_1.H^2}{\ch_0H^3}$};
		\draw (-8,3.5) node [above] {$w, \frac{\ch_2.H}{\ch_0H^3}$};
		
		\draw (2,0) node [below] {$b, \frac{\ch_1.H^2}{\ch_0H^3}$};
		\draw (-0.5,3.5) node [above] {$w, \frac{\ch_2.H}{\ch_0H^3}$};
	
		\draw (-7.3,2.5) node [right] {\Large{$U$}};
		\draw (0.2,2.5) node [right] {\Large{$U$}};

		\draw (1.5, 1.2) node [right] {$\Pi(E)$};
		
		\draw[color=black, dashed] (1.5, 1.2) -- (-2.9,2.8);
		\draw[color=black, dashed] (1.5, 1.2) -- (-2.2, .7);
		
		\draw (-2.9,2.8) node [left] {$\ell_1$};
		\draw (-2.2, .7) node [left] {$\ell_2$};

		\draw[color=black, semithick] (-2.3 ,2.58) -- (.88,1.423);
		\draw[color=black, semithick] (-1.5,.795) -- (0.7,1.092);

		\begin{scriptsize}
	
%
%
%

		\fill [color=black] (1.5,1.2) circle (2pt);
		
%

		
%
%
%
%
%
%
		

		\end{scriptsize}
		
		\end{tikzpicture}
		
		\caption{The line segments $\ell_i \cap U$ are walls for $E$.}
		
		\label{wall.figure}
		
	\end{centering}
	
\end{figure}
\end{Prop}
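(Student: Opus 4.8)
The plan is to dispose of (a) by a short algebraic computation identifying the candidate lines, then to invest the real effort in establishing local finiteness of the walls, and finally to read off (b) and (c) from that. For the candidate lines: for any two numerical classes $v,v'$, consider the equation $\nu_{b,w}(v)=\nu_{b,w}(v')$ using the form \eqref{noo}, whose numerator is linear in $w$ and whose denominator $\ch_1^{bH}(\,\cdot\,).H^2=\ch_1(\,\cdot\,).H^2-b\,\ch_0(\,\cdot\,)H^3$ is linear in $b$. After clearing denominators the only quadratic monomial is $bw\,\ch_0(v)\ch_0(v')(H^3)^2$, and it occurs with the same coefficient on both sides, so it cancels: the equation is \emph{linear} in $(b,w)$, with zero locus a line $\ell_{v,v'}$ (or $\emptyset$, or all of $\R^2$ when $v,v'$ are proportional). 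Substituting $(b,w)=\Pi(v)$ kills both the numerator and the denominator of $\nu_{b,w}(v)$, so $\Pi(v)\in\ell_{v,v'}$ whenever $\ch_0(v)\ne0$, and symmetrically $\Pi(v')\in\ell_{v,v'}$; if $\ch_0(v)=0$ then $\nu_{b,w}(v)\equiv\ch_2(v).H/\ch_1(v).H^2$ and the locus is a line of exactly that slope. Taking $\{\ell_i\}_{i\in I}$ to be the collection of all these lines $\ell_{v(E),v'}$ gives (a); moreover, parametrising $\ell_i$ through $\Pi(E)$ shows immediately that $\nu_{b,w}(E)$ equals the geometric $\mathrm{slope}(\ell_i)$ all along $\ell_i\cap U$, and likewise $\nu_{b,w}(F)$ whenever $\Pi(F)\in\ell_i$, which is the "slope constant on $\ell_i\cap U$" assertion of (c).

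The heart of the argument is the local finiteness. Fix a compact $K\subset U$; I must bound how many $\ell_i$ that are genuine walls can meet $K$. If $\ell_i\cap U$ is a genuine wall there is $(b,w)\in\ell_i\cap K$, a shift $k$ with $E[k]\in\cA(b)$ (only finitely many shifts occur for $(b,w)\in K$, since the HN slopes of the finitely many cohomology sheaves of $E$ are fixed while $b$ varies in a bounded interval), and, peeling off a Jordan--Hölder factor, a short exact sequence $0\to F\to E[k]\to G\to0$ in $\cA(b)$ with $F$ and $G$ both $\nu_{b,w}$-semistable of slope $\nu_{b,w}(E)$. Since $F,G\in\cA(b)$, Remark \ref{heart} gives $0\le\ch_1^{bH}(F).H^2\le\ch_1^{bH}(E[k]).H^2$, so $P_F:=\ch_1^{bH}(F).H^2$ is bounded on $K$ in terms of $E$. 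Now $F$ is $\nu_{b,w}$-semistable, so the classical Bogomolov--Gieseker inequality \eqref{discr} forces $\Delta_H(F)\ge0$; rewriting $\Delta_H(F)$ in the variables $P_F$ and $r_F:=\ch_0(F)H^3$ and using $\nu_{b,w}(F)=\nu_{b,w}(E)$ to eliminate $\ch_2(F).H$ yields
\[
0\ \le\ \Delta_H(F)\ =\ P_F^2 \,+\, 2\big(b-\nu_{b,w}(E)\big)\,r_F P_F \,+\, \big(b^2-2w\big)\,r_F^2 .
\]
Because $b^2-2w<0$ on $U$ and is bounded away from $0$ on $K$, this bounds $|r_F|$ by a multiple of $P_F$; hence $\ch_0(F)$ is bounded on $K$, and then so are $\ch_1(F).H^2$ and, via $\nu_{b,w}(F)=\nu_{b,w}(E)$ once more, $\ch_2(F).H$. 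Thus the integral triple $\big(\ch_0(F)H^3,\ch_1(F).H^2,\ch_2(F).H\big)$ ranges over a bounded subset of a fixed lattice, takes finitely many values, and each value pins down $\ell_i$ as the line through $\Pi(E)$ and $\Pi(F)$; so only finitely many walls meet $K$. (The exceptional locus $\ch_1^{bH}(E).H^2=0$, a vertical segment in $U$, is handled separately: a wall there forces $\ch_1^{bH}(F).H^2=0$, and $\Delta_H(F)\ge0$ again bounds the class of $F$.)

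Finally I would deduce (b) and (c). For (b): on a connected component $C$ of $U\setminus\bigcup\ell_i$ no $\ell_i$ is crossed, and by the local finiteness only finitely many classes $v'$ can be classes of sub- or quotient objects producing a wall meeting a neighbourhood of $C$; for each such $v'$ the condition "$\nu_{b,w}(v')\ge\nu_{b,w}(E)$" is, by the first paragraph, either empty on $C$ or all of $C$, so no destabiliser can appear or disappear inside $C$, and $\nu_{b,w}$-(semi)stability of $E$ is therefore constant on $C$. For (c): given a genuine wall $\ell_i\cap U$, pick a point on it together with the sequence $0\to F\to E[k_i]\to G\to0$ above; by the first paragraph the slopes $\nu_{b,w}(F)=\nu_{b,w}(E[k_i])=\mathrm{slope}(\ell_i)$ coincide identically on $\ell_i\cap U$, so $E[k_i],F\in\cA(b)$ and $E[k_i]$ remains $\nu_{b,w}$-semistable all along $\ell_i\cap U$, while off the wall the difference $\nu_{b,w}(F)-\nu_{b,w}(E[k_i])$ is a nonzero linear function vanishing exactly on $\ell_i$ and changing sign across it, so the inclusion $f\colon F\hookrightarrow E[k_i]$ strictly destabilises in precisely one of the two adjacent chambers; this is the required map. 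I expect the second step — local finiteness, and specifically the a priori bound on $\ch_0(F)$ for destabilising subobjects $F$ — to be the main obstacle; it is exactly there that the classical inequality \eqref{discr} is indispensable, used together with the structural bound $0\le\ch_1^{bH}(F).H^2\le\ch_1^{bH}(E[k]).H^2$ coming from membership in the heart $\cA(b)$.
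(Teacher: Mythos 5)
Your proposal is correct in outline and half of it coincides with the paper's argument: your observation that the $bw$ term cancels, making $\nu_{b,w}(v)=\nu_{b,w}(v')$ a linear condition in $(b,w)$, is exactly the paper's computation \eqref{one}--\eqref{three}, and the sign change of $\nu_{b,w}(F)-\nu_{b,w}(E[k])$ across that line gives (a) and the destabilising chamber in (c) just as in the paper. The genuine difference is local finiteness: the paper does not prove it but cites \cite[Proposition 9.3]{Br} and \cite[Proposition 12.5]{BMS}, whereas you reconstruct the standard argument of those references, bounding the numerical class of a destabiliser over a compact $K\subset U$ by combining the heart bound $0\le \ch_1^{bH}(F).H^2\le\ch_1^{bH}(E[k]).H^2$ with \eqref{discr} along the equal-slope locus; your quadratic identity for $\Delta_H(F)$ is correct, so this buys self-containedness at the cost of redoing the cited work. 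Two places need tightening. First, the step ``this bounds $|r_F|$ by a multiple of $P_F$'' tacitly assumes $|b-\nu_{b,w}(E)|$ is bounded on $K$, which fails when $\ch_0(E)\ne0$ and $K$ meets the locus $\ch_1^{bH}(E).H^2=0$, where $\nu_{b,w}(E)$ blows up; your parenthetical only treats walls lying exactly on that vertical segment, not nearby points. The estimate does survive, because $P_F\le\ch_1^{bH}(E[k]).H^2$ while $|\nu_{b,w}(E)|\cdot\ch_1^{bH}(E[k]).H^2=|\ch_2(E).H-w\ch_0(E)H^3|$ is bounded on $K$, so the product $|\nu_{b,w}(E)|\,P_F$ is bounded and the same trick also bounds $\ch_2(F).H$ --- but you should say this explicitly. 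Second, in (c) you assert, rather than prove, that $F$ and $E[k_i]$ lie in $\cA(b)$ and that $E[k_i]$ remains semistable at \emph{every} point of $\ell_i\cap U$; since the heart varies with $b$ this is exactly the point the paper does argue: the Jordan--H\"older factors have constant, equal slope along the wall, and when that slope is finite Remark \ref{heart} keeps them (hence $F$ and $E[k_i]$) inside $\cA(b)$, while an infinite slope forces a vertical wall on which $\cA(b)$ is constant. With these two clarifications your proof is complete.
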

\begin{proof}
	For $E \in \mathcal{D}(X)$ the existence of a locally finite set of walls in the $(b,w)$ plane follows from the arguments in \cite[Proposition 9.3]{Br} or \cite[Proposition 12.5]{BMS}. 
	
	Suppose that $E$ is $\nu\_{b,w}$-strictly semistable. Then there is a $k\in\Z$ such that $E[k]\in\cA(b)$ and a $\nu_{b,w}$-stable destabilising object $F\subset E[k]$ in $\cA(b)$. The condition that $\nu\_{b,w}(E[k])=\nu\_{b,w}(F)$ is
\begin{equation}\label{one}
\frac{w-\frac{\ch_2(E[k]).H}{\ch_0(E[k])H^3}}{b-\frac{\ch_1(E[k]).H^2}{\ch_0(E[k])H^3}}\ =\ \frac{w-\frac{\ch_2(F).H}{\ch_0(F)H^3}}{b-\frac{\ch_1(F).H^2}{\ch_0(F)H^3}} \quad\mathrm{if}\ \,\ch_0(E[k])\ne0\ne\ch_0(F),
\end{equation}
or
\begin{equation}\label{two}
\frac{w-\frac{\ch_2(E[k]).H}{\ch_0(E[k])H^3}}{b-\frac{\ch_1(E[k]).H^2}{\ch_0(E[k])H^3}}\ =\ \frac{\ch_2(F).H}{\ch_1(F).H^2} \quad\mathrm{if}\ \,\ch_0(E[k])\ne0=\ch_0(F),
\end{equation}
or
\begin{equation}\label{three}
\frac{\ch_2(E[k]).H}{\ch_1(E[k]).H^2}\ =\ \frac{w-\frac{\ch_2(F).H}{\ch_0(F)H^3}}{b-\frac{\ch_1(F).H^2}{\ch_0(F)H^3}} \quad\mathrm{if}\ \,\ch_0(E[k])=0\ne\ch_0(F).
\end{equation}
As we move through the $(b,w)$ plane, \eqref{one} is the equation of the straight line joining $\Pi(E)$ and $\Pi(F)$, \eqref{two} is the straight line though $\Pi(E)$ of slope $\frac{\ch_2(F).H}{\ch_1(F).H^2}$, and \eqref{three} is 
the line through $\Pi(F)$ of slope $\frac{\ch_2(E[k]).H}{\ch_1(E[k]).H^2}$. In each case the slopes of $E[k]$ and $F$ are constant on the wall, and satisfy strict (and opposite) inequalities on the two sides of the wall. This explains the shape of the walls of instability.

If $\ch_0(E[k])=0=\ch_0(F)$ we do not get a wall since both slopes remain constant as we move throughout the whole of $U$ in the $(b,w)$ plane.

Finally, if we move along a wall, the $\nu\_{b,w}$-slopes of all the Jordan-H\"older factors of $E[k]$ coincide and remain constant. So long as they're finite, Remark \ref{heart} implies that the Jordan-H\"older factors remain in the heart $\cA(b)$, and so $E[k]$ does too. If they're infinite the wall is vertical, and the category $\cA(b)$ is constant, so the conclusion is the same.
\end{proof}
	


%
%

\section{Large volume limit}	
As usual we consider a line bundle $L$ on $D \in |\mathcal{O}(n)|$ such that $L.H = 0$. The Chern character of its push-forward is  
\begin{equation}\label{ch}
\ch(\iota_*L)\ =\ \left(0,\,nH,\,\iota_*(c_1(L))-\frac{n^2}2H^2,\,\frac12L^2+\frac{n^3}6H^3\right).
\end{equation}
To move through the space $U$ \eqref{Udef} of weak stability conditions, we begin in the large volume region $w\gg0$. We use the fact that $L$ is slope stable on $D$ since it has no proper saturated subsheaves when $D$ is irreducible. (The results of this paper also hold for reducible $D$ if we \emph{assume} that $\iota_*L$ is slope semistable.)

	\begin{Lem} \label{large volume} 
		The sheaf $\iota_*L$ is $\nu\_{b,w}$-semistable for any $b \in\R$ and $w \gg 0$.	
	\end{Lem}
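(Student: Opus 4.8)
The plan is to show that for $w$ large, every short exact sequence $0\to F\to\iota_*L\to Q\to 0$ in $\cA(b)$ must satisfy $\nu_{b,w}(F)\le\nu_{b,w}(\iota_*L)$. First I would record from \eqref{ch} that $\ch_0(\iota_*L)=0$ and $\ch_1(\iota_*L).H^2 = nH^3>0$, so by \eqref{noo} the slope $\nu_{b,w}(\iota_*L)$ has a fixed finite value: its numerator $\ch_2(\iota_*L).H - w\cdot 0 = \ch_2(\iota_*L).H$ is independent of $w$, and its denominator $\ch_1^{bH}(\iota_*L).H^2 = nH^3$ is a positive constant (using $c_1(L).H=0$). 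Hence $\nu_{b,w}(\iota_*L)$ is bounded, independent of $w$.

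Next I would analyse a putative destabilising subobject $F\subset\iota_*L$ in $\cA(b)$. Write $F = \{F^{-1}\xrightarrow{d} F^0\}$; the quotient $Q$ sits in $\cA(b)$ as well. By definition of $\cA(b)$ and the long exact cohomology sequence, $H^{-1}(F)$ is a subsheaf of $H^{-1}(\iota_*L)=0$, so $F$ is a genuine sheaf, and $F\hookrightarrow \iota_*L$ as sheaves with quotient $H^0(Q)$ a sheaf (since $H^{-1}(Q)\subset H^0(F)$ must also vanish or be controlled — more precisely $H^{-1}(Q)$ is a subsheaf of the torsion sheaf $\iota_*L$ with $\mu_H^+\le b$, but being a subsheaf of a pure 2-dimensional sheaf it is itself supported in dimension $\le 2$, forcing $\ch_0(H^{-1}(Q))=0$; one checks this is compatible and $F$ is still a subsheaf of $\iota_*L$). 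So $F$ is a subsheaf of the pure 2-dimensional sheaf $\iota_*L$; hence $\ch_0(F)=0$ and $\ch_1(F).H^2\ge 0$, with $\ch_1(F).H^2 = 0$ only if $F$ is supported in dimension $\le 1$.

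Now I split into cases according to $\ch_1(F).H^2$. If $\ch_1(F).H^2>0$, then since $F\subset\iota_*L$ with $\ch_1(\iota_*L).H^2 = nH^3$ we have $0<\ch_1(F).H^2\le nH^3$, a bounded positive quantity; meanwhile $\ch_2(F).H$ is bounded above by the classical Bogomolov–Gieseker inequality \eqref{discr} applied to the Harder–Narasimhan/Jordan–Hölder factors of $F$ (equivalently, $\ch_2(F).H$ for a subsheaf of a fixed pure sheaf is bounded above, by boundedness of subsheaves). Since $\ch_0(F)=0$, the numerator of $\nu_{b,w}(F)$ in \eqref{noo} is again $\ch_2(F).H$ with no $w$-term, so $\nu_{b,w}(F) = \frac{\ch_2(F).H}{\ch_1^{bH}(F).H^2}$ is bounded above independently of $w$ — in fact by a constant depending only on the class of $\iota_*L$ and on $b$. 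Comparing with the fixed value of $\nu_{b,w}(\iota_*L)$, the inequality $\nu_{b,w}(F)\le\nu_{b,w}(\iota_*L)$ either holds outright or can be arranged for all $w\gg 0$ by the standard large-volume argument. The remaining case $\ch_1(F).H^2 = 0$: then $\ch_1^{bH}(F).H^2 = 0$, so by the conditions in Remark \ref{heart} defining membership in $\cA(b)$ we need $\ch_2(F).H\ge 0$, and $\nu_{b,w}(F)=+\infty$ is only allowed if $F$ is a subobject — but a subsheaf of $\iota_*L$ supported in dimension $\le1$ with $\ch_2(F).H\ge 0$ forces $\ch_2(F).H=0$ too; such an $F$ cannot destabilise since then one compares via the next Chern character, or one simply notes $\iota_*L = L$ is slope stable on $D$ so has no such subsheaves of positive-dimensional support contributing.

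The main obstacle I anticipate is the careful bookkeeping in the dimension-$\le 1$ case and the verification that a destabilising subobject in $\cA(b)$ really is a subsheaf of $\iota_*L$ (controlling $H^{-1}$ of the quotient): this is where slope stability of $L$ on $D$ and purity of $\iota_*L$ are essential. Once $F$ is known to be a subsheaf with $\ch_0(F)=0$ and bounded $(\ch_1.H^2,\ch_2.H)$, the rest is the routine large-volume limit: $\nu_{b,w}(\iota_*L)$ is a fixed constant while every competing slope is bounded above uniformly in $w$, so no wall can persist as $w\to\infty$.
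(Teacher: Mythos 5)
There is a genuine gap, and it sits at the heart of your argument. From the long exact sequence of cohomology sheaves you correctly get that a destabilising subobject $F\subset\iota_*L$ in $\cA(b)$ is a sheaf, but your next claim --- that $F$ is then a \emph{subsheaf} of $\iota_*L$, so $\ch_0(F)=0$ --- is false. The kernel of the sheaf map $\cH^0(F)\to\iota_*L$ is $\cH^{-1}(Q)$, which is \emph{not} a subsheaf of $\iota_*L$ (it is a subsheaf of $F$), and since $Q\in\cA(b)$ it is a torsion-free sheaf which may well have positive rank; in that case $F$ itself has positive rank. Your attempted argument that ``$\cH^{-1}(Q)$ is a subsheaf of the torsion sheaf $\iota_*L$, forcing $\ch_0(\cH^{-1}(Q))=0$'' therefore rests on a false premise, and the conclusion is wrong in an essential way: the whole point of the paper is that $\iota_*L$ \emph{is} destabilised on the first wall by a rank~one subobject $F_1\subset\iota_*L$ in $\cA(b_0)$ (Proposition \ref{prop.1}), so positive-rank subobjects genuinely occur and must be handled. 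They are exactly the ones the large volume limit kills: for such $F$ one has $\ch_0(F)\neq0$ and, from membership in $\cA(b)$, $\ch_1^{bH}(F).H^2>0$, so by \eqref{noo} the slope $\nu\_{b,w}(F)\to-\infty$ as $w\to\infty$ (with a bound uniform in $F$, as in Bridgeland), while $\nu\_{b,w}(\iota_*L)$ is constant. This case is entirely missing from your proof.

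Conversely, in the case you do treat ($\ch_0(F)=0$, i.e.\ $F$ genuinely a subsheaf of $\iota_*L$), the appeal to ``the standard large-volume argument'' is vacuous: by \eqref{noo} the slope of a rank-zero object is independent of $w$, so nothing improves as $w\to\infty$ and boundedness of $\ch_2(F).H$ proves nothing. The correct and only input here is \eqref{indep}: a rank-zero subsheaf is $\iota_*E$ for a subsheaf $E\subset L$ on $D$, its $\nu\_{b,w}$-slope equals $\mu\_H(E)-\tfrac n2$, and slope (semi)stability of $L$ on the irreducible surface $D$ (together with purity of $L$, which rules out subsheaves of $\iota_*L$ supported in dimension $\le1$, i.e.\ the $\ch_1(F).H^2=0$ case you worry about) gives $\nu\_{b,w}(F)\le\nu\_{b,w}(\iota_*L)$ directly. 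So the correct proof splits into the two cases $\ch_0(F)\neq0$ (dispatched by $w\gg0$) and $\ch_0(F)=0$ (dispatched by stability of $L$ on $D$), whereas your write-up collapses the first case into the second by an incorrect reduction and then uses the large-volume limit precisely where it has no effect.
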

	\begin{proof}
We sketch the proof, which is very similar to \cite[Proposition 14.2]{Br}. The key point is that a sheaf $\iota_*E$ pushed forward from $D$ has rank 0 so its $\nu\_{b,w}$-slope \eqref{noo},
		\begin{equation}\label{indep}
		\nu\_{b,w}(\iota_*E)\ =\ \frac{\ch_2(\iota_*E).H}{\ch_1(\iota_*E).H^2}\ =\ \frac{\ch_1(E).H}{\ch_0(E)H^2}-\frac n2\ =\ \mu\_H(E)-\frac n2\,,
		\end{equation}
is independent of $(b,w) \in \mathbb{R}^2$ and essentially reduces to the ordinary slope of $E$ on $D$. Here the intersections take place on $X$ in the second term and on $D$ in the third term. (On reducible $D$ the denominator $\ch_0(E)H^2$ would be replaced by the leading coefficient of the Hilbert polynomial of $E$.)
		
		Fix a real number $b \in \mathbb{R}$. The sheaf $\iota_* L$ is in the heart $\mathcal{A}(b)$. Fix a subobject $E_1$ of $\iota_* L$ in $\mathcal{A}(b)$ with quotient $E_2$. Then the ordinary cohomology sheaves $\cH^i$ of these objects sit in a long exact sequence 
		\begin{equation*}
		0 \To \cH^{-1}(E_2) \To \cH^0(E_1) \To \iota_*L \To \cH^0(E_2) \To 0.
		\end{equation*} 
In particular $E_1$ is a sheaf. Suppose first that rank\,$(E_1)\ne0$. Since $E_1\in\cA(b)$ we know $\mu_H^-(E_1)>b\Rightarrow\mu\_H(E_1)>b\Rightarrow \ch_1^{bH}(E_1).H^2>0$. By \eqref{noo} therefore, $+\infty>\nu\_{b,w}(E_1)\to-\infty$ as $w\to\infty$, so $E_1$ does not destabilise $\iota_*L$ for $w\gg0$. As in \cite[Proposition 14.2]{Br} one can in fact make the bound on $w$ (so that $E_1$ does not destabilise) uniform in $E_1$.

If rank$\,(E_1)=0$ then $\cH^{-1}(E_2) = 0$ because $E_2\in\cA(b)$ implies that $\cH^{-1}(E_2)$ is a torsion-free sheaf. Therefore $E_1$ is a subsheaf of $\iota_* L$, which by \eqref{indep} and the slope semistability of $L$ cannot strictly $\nu\_{b,w}$-destabilise $\iota_* L$.  
%
%
%
\end{proof}


\section{The first wall}
From now on we work in one of the situations
\begin{itemize*}
	\item[(i)] suppose \ref{BG1} holds, $n \geq 4$ and $L^2 \ge \big\lfloor\frac{-2n}{3}\big\rfloor+1$, or
	\item[(ii)] suppose \ref{BG2} holds, $n \geq 10$ and $L^2 \ge -2n+5$.
\end{itemize*}
Then moving in the space $U$ of weak stability conditions we will try to show that $c_1(L)$ is a torsion class in $H^2(D,\Z)$. This will prove Theorem \ref{main}.

By Proposition \ref{locally finite set of walls} the walls of instability for $\iota_*L$ are all lines of slope $-\frac n2$ in the $(b,w)$ plane; see Figure \ref{walls}. The lowest such line which intersects $\overline U$ is $w=-\frac n2b-\frac{n^2}8$, which is tangent to $\partial U$ at $(-\frac n2,\frac{n^2}8)$. Therefore the vertical line
\begin{equation}\label{bb0}
b\ \equiv\ b_0\ :=\ -\frac n2
\end{equation}
intersects all the possible walls of instability of $\iota_*L$. We will move down this vertical line from the large volume region $w\gg0$.

By \eqref{ch}, $\ch_2^{bH}(\iota_* L).H=0=\ch_0(\iota_* L)$ on the line $b=b_0$, so we can apply the Bogomolov-Gieseker Conjecture \ref{conjecture} for stability parameters  $\left(-\frac n2\,, w \right)$. 
That is,  if $\iota_*L$ is $\nu\_{b_0,w}$-semistable then
$$
\ch_3^{b_0H}(\iota_* L)\ \leq\ \bigg(\frac{w}{3} - \frac{b_0^2}{6}\bigg)\ch_1^{b_0H}(\iota_* L).H^2.
$$
Using \eqref{ch} and rearranging gives
\begin{equation}\label{final bound}
    w\ \ge\ w_f\ :=\ \dfrac{n^2}{4} + \dfrac{3L^2}{2nH^3}\,.
\end{equation}

Note that case (i) gives $w_f>\frac{n^2}4-\frac1{H^3}$, while case (ii) gives $w_f > \frac{n^2}4-\frac{3n-6}{nH^3} > \frac{n^2}4-\frac{3}{H^3}$. In both cases then, $w_f > \frac{b_0^2}{2} = \frac{n^2}{8}$, so $(b_0,w_f)$ lies inside $U$.

Therefore, when we move down the line $b=-\frac n2$, we find there is a point $w_0\ge w_f$ where $\iota_*L$ is first destabilised. We next show that in fact $w_0\in\big[w_f , \frac{n^2}4\big]$.
\vspace{.2 cm}



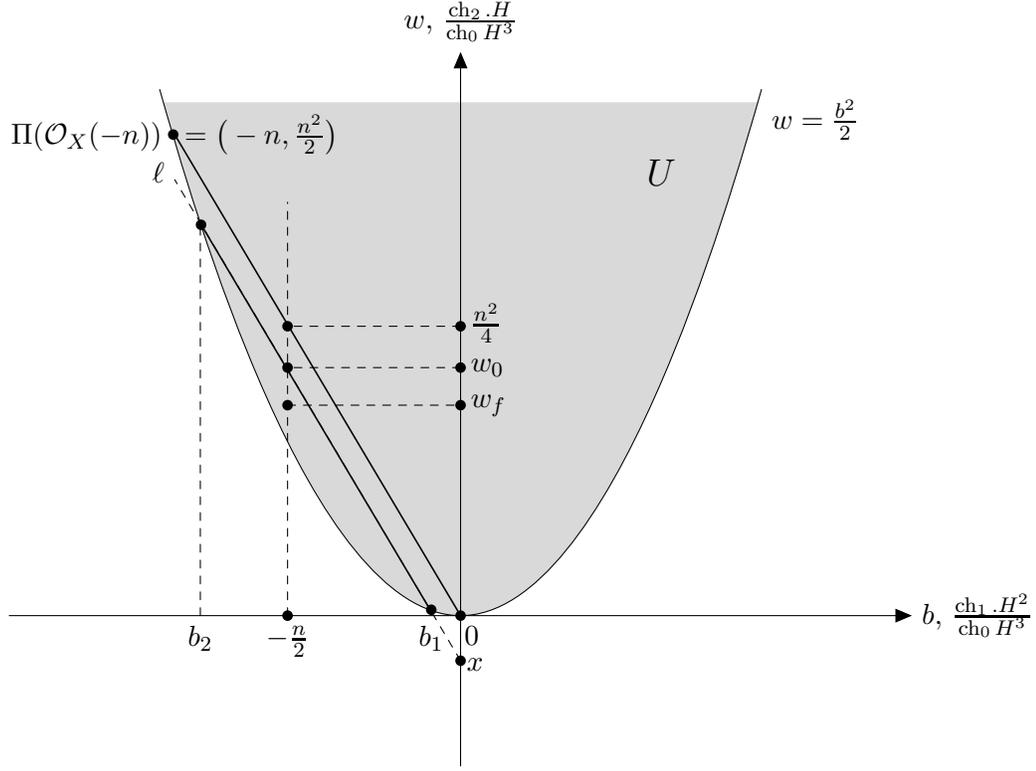
\begin{figure}[h]
	\begin{centering}
		\definecolor{zzttqq}{rgb}{0.27,0.27,0.27}
		\definecolor{qqqqff}{rgb}{0.33,0.33,0.33}
		\definecolor{uququq}{rgb}{0.25,0.25,0.25}
		\definecolor{xdxdff}{rgb}{0.66,0.66,0.66}
		
		\begin{tikzpicture}[line cap=round,line join=round,>=triangle 45,x=1.0cm,y=1.0cm]
		
		\draw[->,color=black] (-6,0) -- (6,0);
		\draw  (6, 0) node [right ] {$b,\,\frac{\ch_1.H^2}{\ch_0H^3}$};

		\fill [fill=gray!30!white] (0,0) parabola (3.95, 6.83) parabola [bend at end] (-3.95, 6.83) parabola [bend at end] (0,0);
		
		\draw  (0,0) parabola (4,7); 
		\draw  (0,0) parabola (-4,7); 
		\draw  (4 , 6.6) node [right] {$w=\frac{b^2}{2}$};
		
		

		\draw[->,color=black] (0,-2) -- (0,7.5);
		\draw  (0, 7.5) node [above ] {$w,\,\frac{\ch_2.H}{\ch_0H^3}$};
		
		
		\draw[dashed,color=black] (-2.3,0) -- (-2.3,5.5);
		
		\draw[dashed,color=black] (0,3.85) -- (-2.3,3.85);
		
		\draw[dashed,color=black] (0,3.3) -- (-2.3,3.3);
		
		\draw [color=black, semithick] (0,0) -- (-3.8,6.4);
		
		\draw [color=black, semithick] (-.4, .07)  --  (-3.46,  5.22);
			\draw [color=black, dashed] (-.39, .07)  --  (-.39, 0);
			\draw [color=black, dashed] (-3.46, 0)  --  (-3.46,  5.2);
		\draw [color=black, dashed] (0,-.6) -- (-3.8,5.8);

		\draw [color=black, dashed] (-2.3, 2.8)  --  (0,  2.8);
		
		\draw  (-3.8, 6.8) node [below ] {$\Pi(\mathcal{O}_X(-n))\ \,=\big(-n,\frac{n^2}2\big)$};
		\draw  (0, 3.9) node [right ] {$\frac{n^2}{4}$};
		\draw  (0, 3.3) node [right ] {$w_0$};
		\draw  (0, 2.8) node [right ] {$w_f$};
		
		\draw  (-2.3, 0) node [below ] {$-\frac{n}{2}$};
		\draw  (.15, 0) node [below] {0};
		\draw  (-3.8,5.9) node [left ] {$\ell$};
		\draw  (3,5.9) node [left ] {\Large{$U$}};
		
		\draw  (-3.46,0) node [below] {$b_2$};
		\draw  (-.39, 0) node [below ] {$b_1$};
			\draw  (-.05, -.65) node [right] {$x$};
		
		\begin{scriptsize}
		\fill (-2.3, 2.8) circle (2pt);
		\fill (0, 2.8) circle (2pt);
				
		\fill (0,0) circle (2pt);
		
		\fill (0, -.6) circle (2pt);
		\fill (0, 3.85) circle (2pt);
		
		\fill (-3.82,6.4) circle (2pt);
		\fill (-2.3,0) circle (2pt);
		\fill (-2.3,3.85) circle (2pt);

		\fill (-2.3,3.3) circle (2pt);
		\fill (0,3.3) circle (2pt);

    	\fill (-.39, .078) circle (2pt);
		\fill (-3.45,  5.2) circle (2pt);

		\end{scriptsize}
		
		\end{tikzpicture}
		
		\caption{Walls for $\iota_*L$}
		
		\label{walls}
		
	\end{centering}
\end{figure}

	\begin{Prop}\label{prop.1}
There is a wall of slope $-\frac n2$ for $\iota_* L$ that bounds the large volume chamber $w\gg0$. It passes through a point $(b_0, w_0)$, where $w_0 \in\big[w_f , \frac{n^2}{4}\big]$.

In the destabilising sequence $F_1 \hookrightarrow \iota_* L \twoheadrightarrow F_2$ in $\cA(b_0)$, we have $\dim\mathrm{supp}\,\cH^0(F_2)\le1$, the 
object $F_1$ is a rank one sheaf with $\ch_1(F_1).H^2= 0$ and, in cases \emph{(i),\,(ii),}
\begin{itemize*}
	\item[\emph{(i)}] $\ch_2(F_1).H = 0$,
	\item[\emph{(ii)}] $\ch_2(F_1).H\in\{0,-1,-2\}$.
\end{itemize*}
 
	\end{Prop}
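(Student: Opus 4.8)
The plan is to locate the first wall of $\iota_*L$ on the vertical line $b=b_0=-\tfrac n2$ and then extract every assertion from the destabilising sequence together with the Bogomolov inequality \eqref{discr} applied to its three terms. That the wall exists is immediate: $\iota_*L$ is $\nu\_{b_0,w}$-semistable for $w\gg0$ by Lemma~\ref{large volume}, while the computation yielding \eqref{final bound} shows a $\nu\_{b_0,w}$-semistable $\iota_*L$ must have $w\ge w_f>\tfrac{n^2}8=\tfrac{b_0^2}2$; hence $\iota_*L$ is destabilised throughout the nonempty interval $w\in(\tfrac{n^2}8,w_f)$, and since by Proposition~\ref{locally finite set of walls} its walls are locally finite lines of slope $-\tfrac n2$, there is a largest such $w_0\ge w_f$. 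As in the proof of Proposition~\ref{locally finite set of walls} I fix a destabilising sequence $F_1\hookrightarrow\iota_*L\twoheadrightarrow F_2$ in $\cA(b_0)$ with $F_1$ being $\nu\_{b_0,w_0}$-stable; all three terms then have $\nu\_{b_0,w_0}$-slope $-\tfrac n2$, and since $\iota_*L$ has rank $0$, part (a) of Proposition~\ref{locally finite set of walls} already forces $\ch_0(F_1)\ne0$, hence $r:=\rk F_1\ge1$.

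Next I analyse the cohomology sheaves of the triangle. Since $\iota_*L$ is a sheaf, $\cH^{-1}(F_1)=0$ (so $F_1$ is a sheaf), $\cH^{-1}(F_2)$ is torsion free with $\mu_H^+\le b_0$, $\cH^0(F_2)$ is a quotient of $\iota_*L$ and hence an $\cO_D$-module, and there is an exact sequence $0\to\cH^{-1}(F_2)\to F_1\to\iota_*L\to\cH^0(F_2)\to0$. Writing $G:=\operatorname{im}(F_1\to\iota_*L)$ one gets $F_1/\cH^{-1}(F_2)\cong G\subseteq\iota_*L$ and $\rk\cH^{-1}(F_2)=r$. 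As $\iota_*L$ is torsion free of rank $1$ on $D$, either $G=0$ or $G$ has rank $1$ on $D$; but $G=0$ would give $F_1\cong\cH^{-1}(F_2)$ and so $\mu_H^-(F_1)\le b_0$, contradicting $F_1\in\cA(b_0)$. Hence $G$ has rank $1$ on $D$, so $\cH^0(F_2)=\iota_*L/G$ has $\dim\operatorname{supp}\le1$ and $\ch_1(G).H^2=\ch_1(\iota_*L).H^2=nH^3$; consequently $\ch_1(F_1).H^2=\ch_1(\cH^{-1}(F_2)).H^2+nH^3\le nH^3-\tfrac n2 rH^3$, i.e. $x:=\ch_1^{b_0H}(F_1).H^2\le nH^3$, while $x>0$ because $F_1$ is a positive-rank sheaf in $\cA(b_0)$.

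Finally the numerical endgame. Put $\rho=rH^3$. The wall relation $\nu\_{b_0,w_0}(F_1)=-\tfrac n2$ reads $\ch_2(F_1).H=w_0\rho-\tfrac n2 x$, and substituting this into \eqref{discr} collapses to $\Delta_H(F_1)=x^2+(\tfrac{n^2}4-2w_0)\rho^2\ge0$; the same computation for $F_2$ gives $\Delta_H(F_2)=(nH^3-x)^2+(\tfrac{n^2}4-2w_0)\rho^2\ge0$. Combining these with $w_0\ge w_f$ — which via the explicit $w_f$ gives $2w_f-\tfrac{n^2}4>\tfrac{n^2}4-\tfrac2{H^3}$ in case (i) and $>\tfrac{n^2}4-\tfrac6{H^3}$ in case (ii) — forces $x\ge t\rho$ and $nH^3-x\ge t\rho$ with $t:=\sqrt{2w_f-\tfrac{n^2}4}$ lying just below $\tfrac n2$ (the alternative $x\ge nH^3+t\rho$ being excluded by $x\le nH^3$), so $2tr\le n$; the thresholds $n\ge4$, resp. $n\ge10$, then force $r=1$. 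With $r=1$ the same two inequalities confine the integer $\ch_1(F_1).H^2=x-\tfrac n2 H^3$ to a symmetric interval about $0$ of radius $<1$, so $\ch_1(F_1).H^2=0$; then $\Delta_H(F_1)\ge0$ gives $w_0\le\tfrac{n^2}4$, hence $w_0\in[w_f,\tfrac{n^2}4]$ and $0\ge\ch_2(F_1).H=(w_0-\tfrac{n^2}4)H^3\ge(w_f-\tfrac{n^2}4)H^3$, which is $>-1$ in case (i) and $>-3$ in case (ii). Feeding this into the Hodge index bound $\ch_1(F_1)^2.H\le0$, the strong Bogomolov inequality $\ch_1(F_1)^2.H\ge2\ch_2(F_1).H$ valid for the rank-one (hence $\mu_H$-stable) sheaf $F_1$, and the integrality of the relevant Chern numbers then pins $\ch_2(F_1).H=0$ in case (i) and $\ch_2(F_1).H\in\{0,-1,-2\}$ in case (ii).

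The main obstacle is this last numerical squeeze: obtaining $r=1$ exactly (rather than merely $r\le2$) and then locating $\ch_2(F_1).H$ inside the stated finite set both depend on the precise shape of $w_f$, on the inequalities $n\ge4$ / $n\ge10$, and on integrality; everything upstream is soft, the one delicate soft point being the rank dichotomy for $G$, which has to be run inside the tilted heart $\cA(b_0)$.
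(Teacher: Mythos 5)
Your argument is correct, and for the two key numerical steps it takes a genuinely different (though ultimately equivalent) route from the paper. The paper never needs your subsheaf argument for $\dim\operatorname{supp}\cH^0(F_2)\le1$: it extracts the bounds $\mu_H^-(F_1)\ge b_1$ and $\mu_H^+(\cH^{-1}(F_2))\le b_2$ from the requirement that $F_1,F_2$ lie in the heart $\cA(b)$ for \emph{every} $b$ in the interval $(b_2,b_1)$ where the wall $\ell$ meets $U$, and then a single inequality \eqref{in.1} simultaneously forces $\ch_0(F_1)=1$, $\ch_1(\cH^0(F_2)).H^2=0$ (hence the support statement) and $\mu\_H(F_1)=0$. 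You instead work only at the point $(b_0,w_0)$ and apply the discriminant inequality \eqref{discr} to both $F_1$ and $F_2$; since $\Pi(F_1),\Pi(F_2)$ lie on $\ell$ and outside $U$, your $x\ge t\rho$ and $nH^3-x\ge t\rho$ are exactly the statements $\mu\_H(F_1)\ge b_1$ and $\mu\_H(\cH^{-1}(F_2))\le b_2$ in disguise (note $t=b_1+\frac n2$), so the two squeezes carry the same information. What the paper's route buys is independence from irreducibility of $D$ (your $G\ne0\Rightarrow\operatorname{rank}G=1$ step is where irreducibility enters, whereas the paper's numerical derivation of $\ch_1(\cH^0(F_2)).H^2=0$ also covers reducible $D$ with $L$ slope semistable, as advertised in the footnote to Theorem \ref{main}); what yours buys is that the heart condition is only needed at $b=b_0$. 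Two small points to tidy: (1) \ref{BG1}/\ref{BG2} only assert Conjecture \ref{conjecture} for $w>\frac{n^2}4-\frac1{H^3}$ (resp.\ $-\frac3{H^3}$), not for all $w>\frac{n^2}8$, so the interval on which you can conclude $\iota_*L$ is unstable is $\big(\frac{n^2}4-\frac1{H^3},w_f\big)$ rather than $\big(\frac{n^2}8,w_f\big)$; it is nonempty precisely because $w_f>\frac{n^2}4-\frac1{H^3}$, so the conclusion $w_0\ge w_f$ stands. (2) Your final appeal to the Hodge index theorem and the strong Bogomolov inequality for $F_1$ is redundant (you already have $0\ge\ch_2(F_1).H>-1$, resp.\ $>-3$, from $\Delta_H(F_1)\ge0$ and $w_0\ge w_f$, so only integrality of $\ch_2(F_1).H$ is needed — the same implicit integrality the paper uses) and, as written, slightly unjustified, since $\mu\_H$-stability of the rank-one sheaf $F_1$ requires torsion-freeness, which has not been established at this stage; best to delete that clause.
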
 

	\begin{proof}
By Proposition \ref{locally finite set of walls} and \eqref{final bound}, $\iota_*L$ is $\nu\_{b_0,w_0}$-destabilised by a sequence $F_1 \hookrightarrow \iota_* L \twoheadrightarrow F_2$ in $\cA(b_0)$ for $b_0=-\frac n2$ and some $w_0\ge w_f$. The corresponding wall is denoted by $\ell$ in Figure \ref{walls}. It has equation $w = -\frac{n}{2}b + x$, where
		\begin{equation*}
		x\ =\ w_0-\frac{n^2}4\ \geq\  w_f-\frac{n^2}{4}\ =\  \frac{3L^2}{2nH^3} 
		\end{equation*} 
satisfies
\begin{equation}\label{x>}
x\ >\ \left\{\!\!\begin{array}{cc}-\frac{1}{H^3} & \mathrm{in\ case\ (i)},\vspace{1mm} \\
-\frac{3}{H^3} & \mathrm{in\ case\ (ii).}\end{array}\right.
\end{equation}
		
Let $b_2 < b_1$ be the values of $b$ at the intersection points of $\ell$ and the boundary  $w =\frac{b^2}{2}$ of $U$, 
		\begin{equation*}
		b_1\ =\ \sqrt{{\frac{n^2}{4}}+2x}-\frac n2\,, \qquad
		b_2\ =\ -\sqrt{{\frac{n^2}{4}}+2x}-\frac n2\,.
		\end{equation*}	
		We claim that 
		\begin{equation}\label{b1b2}
		b_1\ >\ -\frac{1}{2H^3} \;\;\; \text{and} \;\;\; 	b_2+n\ <\ \dfrac{1}{2H^3}\,.
		\end{equation}
Both are equivalent to $\sqrt{{\frac{n^2}{4}}+2x} > \frac{n}{2} -\frac{1}{2H^3}$, and therefore to $2x > \frac{1}{4(H^3)^2} - \frac{n}{2H^3}$. Since $x\ge \frac{3L^2}{2nH^3}$ it is sufficient to show
$$
\frac{3L^2}n\ \ge\ \frac{1}{4H^3} - \frac n2\,.
$$
For (i) this follows from $L^2\ge\big\lfloor-\frac{2n}3\big\rfloor+1\ge-\frac{2n}3+\frac13$ and the inequality $-2+\frac1n>-\frac n2+\frac1{4H^3}$ that holds for $n\ge 4$. For (ii) it follows from $L^2\ge-2n+5$ and the inequality $\frac n2>6-\frac{15}n+\frac1{4H^3}$ that holds for all $n\ge10$.
	
		
		Taking cohomology from the destabilising sequence $F_1 \hookrightarrow \iota_* L \twoheadrightarrow F_2$ gives the long exact sequence of coherent sheaves 
		\begin{equation}\label{long exact}
		0 \To \cH^{-1}(F_2) \To\cH^{0}(F_1) \To \iota_* L \To \cH^0(F_2) \To 0.
		\end{equation}
		In particular, the destabilising subobject $F_1$ is a coherent sheaf. As we saw in the proof of Proposition \ref{locally finite set of walls}, if it had rank 0 then its slope would be constant throughout $U$, like that of $\iota_*L$, so we would not have a wall. Thus $\ch_0(F_1)>0$ so \eqref{long exact} gives
		 $$
		 \ch_0(\cH^{-1}(F_2))\ =\ \ch_0(F_1)\ >\ 0.
		 $$
As in Proposition \ref{locally finite set of walls}, $\Pi(F_1)$ and $\Pi(F_2)$ lie on the line $\ell$. All along $\ell\cap U$ (i.e. for $b \in (b_2, b_1)$) the objects $F_1$ and $F_2$ lie in the heart $\mathcal{A}(b)$ and (semi)destabilise $\iota_*L$. Therefore by the definition \eqref{Abdef} of $\cA(b)$ and the inequalities \eqref{b1b2},
		\begin{equation}\label{two conditions}
		\mu_H^{+}(\cH^{-1}(F_2))\ \leq\ b_2\ <\ -n + \frac{1}{2H^3} 
		\;\;\; \text{and} \;\;\; \mu_H^{-}(F_1)\ \geq\ b_1\ >\ \dfrac{-1}{2H^3}\,.
		\end{equation}  
Thus dividing $\left(\ch_1(\iota_*L)-\ch_1(\cH^0(F_2))\right).H^2=\left(\ch_1(F_1)-\ch_1(\cH^{-1}(F_2))\right).H^2$ by $\ch_0(F_1)H^3$ gives
		\begin{align}\label{in.1}
\frac n{\ch_0(F_1)}-\dfrac{\ch_1(\cH^0(F_2)).H^2}{\ch_0(F_1)H^3}\ &=\ \mu\_H(F_1) -\mu\_H(\cH^{-1}(F_2)) \\ \nonumber
&\ge\ \mu_H^{-}(F_1) - \mu_H^{+}(\cH^{-1}(F_2))\ \geq\ b_1 - b_2\ >\ n- \frac{1}{H^3}\,.
		\end{align}
		Since $\cH^0(F_2)$ has rank zero, $\ch_1(\cH^0(F_2)).H^2 \geq 0$ so $\frac n{\ch_0(F_1)}\ge$ \eqref{in.1}. Thus the inequalities imply $\ch_0(F_1) =1$ and $\ch_1(\cH^0(F_2)).H^2=0$. In particular, $\cH^0(F_2)$ is supported in dimension $\le1$.\medskip
				
		Hence $\mu\_H(F_1) = \frac{\ch_1(F_1).H^2}{H^3}$ is an integer multiple of $\frac{1}{H^3}$, so the inequality \eqref{two conditions} implies that $\mu\_H(F_1) \geq 0$.
		Similarly \eqref{two conditions} gives $\mu\_H(\cH^{-1}(F_2)) \leq -n$ while \eqref{in.1} gives $\mu\_H(\cH^{-1}(F_2)) \geq -n$. The upshot is that $\mu\_H(F_1) = 0$ and $\mu\_H(\cH^{-1}(F_2)) = -n$. Hence $\ch_1(F_1).H^2 = 0$ and $\Pi(F_1)$ lies on the $w$-axis. But it also lies on the wall $\ell$ given by $w = \frac{-n}{2} b + x$, so
	  \begin{equation}\label{x=}
	  x\ =\ \frac{\ch_2(F_1).H}{\ch_0(F_1)H^3}\ =\ \frac{\ch_2(F_1).H}{H^3}\,. 
	  \end{equation} 
	  Since the sheaf $F_1$ is $\nu_{b_0,w_0}$-semistable, $\Pi(F_1)$ lies outside $U$ by \eqref{discr}. Thus $x \leq 0$ which is $w_0 \leq \frac{n^2}{4}$, as claimed. Combining this with \eqref{x>} and \eqref{x=} gives, finally,
\[	  
0\ \ge\ \ch_2(F_1).H\ >\ 
\left\{\!\!\begin{array}{cc}-1 & \mathrm{in\ case\ (i)}, \\
-3 & \mathrm{in\ case\ (ii).}\end{array}\right.\qedhere
\]
	\end{proof}
	
   \begin{Prop}\label{prop.case ii}
   		Under the assumptions of Proposition \ref{prop.1},
 the destabilising subobject $F_1$ of $\iota_* L$ satisfies $\ch_2(F_1).H = 0$. That is, $x=0,\,w_0=\frac{n^2}4$, and the wall bounding the large volume chamber is the line of slope $-\frac n2$ through the origin.
   \end{Prop}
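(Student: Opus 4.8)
The plan is to rule out the remaining possibilities $\ch_2(F_1).H\in\{-1,-2\}$ in case (ii) by applying the second bullet point of \ref{BG2} to the destabilising subobject $F_1$ itself. By Proposition \ref{prop.1} we know $F_1$ is a rank-one sheaf with $\ch_1(F_1).H^2=0$; moreover, since $\iota_*L$ is slope semistable with all $\mu_H$-slopes equal to $0$ on $D$ (really $\mu_H(\iota_*L)-n/2 = -n/2$ on $X$), $F_1$ is a subsheaf of $\iota_*L$ only through the mechanism described in \eqref{long exact}, so $F_1$ is torsion-free (its torsion would push forward from $D$ and have rank $0$, contradicting $\ch_0(F_1)=1$). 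Thus $F_1$ is a torsion-free sheaf of rank one with $\ch_1(F_1).H^2=0$ and $\ch_2(F_1).H\in\{-1,-2\}$, which is exactly the class to which the second bullet of \ref{BG2} applies.

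First I would check that $F_1$ is $\nu_{b^*,w^*}$-semistable at the parameters $b^*=\ch_2(F_1).H-\tfrac1{2H^3}$, $w^*=(b^*)^2+\tfrac{\ch_2(F_1).H}{H^3}$ prescribed in \ref{BG2}. By \eqref{x=} we have $x=\ch_2(F_1).H/H^3<0$, and $\Pi(F_1)$ lies on the $w$-axis at height $x$. The wall $\ell$ for $\iota_*L$ passes through $(b_0,w_0)=(-n/2,\,n^2/4+x)$ with slope $-n/2$; along $\ell\cap U$ the object $F_1$ is $\nu_{b,w}$-semistable (being a Jordan--Hölder factor of $\iota_*L$ on the wall, by Proposition \ref{locally finite set of walls}). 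I would then argue that $F_1$ remains semistable as we deform $(b,w)$ off $\ell$ down to $(b^*,w^*)$: any wall for $F_1$ is a line through $\Pi(F_1)=(0,x)$, and the estimates \eqref{b1b2}, \eqref{x>} on the geometry of $\ell$ together with the classical inequality \eqref{discr} for a hypothetical destabiliser constrain such walls; one shows no wall for $F_1$ separates $\ell$ from $(b^*,w^*)$ inside $U$. (Concretely $(b^*,w^*)$ lies very close to $\partial U$ at $b^*\approx\ch_2(F_1).H$, and one checks this point is in the same $F_1$-chamber as a point of $\ell\cap U$.)

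Then I would apply Conjecture \ref{conjecture} via \ref{BG2} to $F_1$ at $(b^*,w^*)$. The hypothesis $\ch_2^{b^*H}(F_1).H=(w^*-\tfrac{(b^*)^2}2)\ch_0(F_1)H^3$ is arranged precisely by the choice of $w^*$ (using $\ch_0(F_1)=1$, $\ch_1(F_1).H^2=0$), so the conjecture yields $\ch_3^{b^*H}(F_1)\le(\tfrac{w^*}3-\tfrac{(b^*)^2}6)\ch_1^{b^*H}(F_1).H^2$. Now $\ch_1^{b^*H}(F_1).H^2=\ch_1(F_1).H^2-b^*\ch_0(F_1)H^3=-b^*H^3$, which is \emph{positive} since $b^*=\ch_2(F_1).H-\tfrac1{2H^3}<0$; so the right-hand side is some explicit negative multiple of $H^3$. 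Expanding the left-hand side $\ch_3^{b^*H}(F_1)=\ch_3(F_1)-b^*\ch_2(F_1).H+\tfrac{(b^*)^2}2\ch_1(F_1).H-\tfrac{(b^*)^3}6\ch_0(F_1)H^3$ and combining the two sides produces a lower bound on $\ch_3(F_1)$ that I would then play off against the upper bound on $\ch_3(F_1)$ coming from the map $F_1\hookrightarrow\iota_*L$ and its cokernel (whose $\cH^0$ is supported in dimension $\le1$, hence contributes a controlled amount to $\ch_3$, while $\cH^{-1}(F_2)$ has $\mu_H=-n$ so its $\ch_3$ is bounded by another BG-type inequality for $\mu_H$-semistable sheaves). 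The resulting numerical contradiction forces $\ch_2(F_1).H\notin\{-1,-2\}$, hence $\ch_2(F_1).H=0$, i.e. $x=0$ and $w_0=n^2/4$.

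The main obstacle is the middle step: verifying that $F_1$ is genuinely $\nu_{b^*,w^*}$-semistable, i.e. that no wall of $F_1$-instability intervenes between the wall $\ell$ (where we know $F_1$ is semistable) and the special point $(b^*,w^*)$. This requires a careful analysis of potential destabilisers of $F_1$ — rank and first Chern class bounds via \eqref{discr} applied to both the sub and the quotient, exactly as in the proof of Proposition \ref{prop.1} — and it is here that the hypotheses $n\ge10$ and $L^2\ge-2n+5$ of case (ii) must be used quantitatively to close the gap. The rest is bookkeeping with Chern characters.
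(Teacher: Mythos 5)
Your first half is on the right track: applying the second bullet of \emph{\ref{BG2}} to $F_1$ at $(b^*,w^*)$ is exactly the paper's Proposition \ref{first}. But the step you flag as ``the main obstacle'' --- that $F_1$ is actually $\nu\_{b^*,w^*}$-semistable --- is not closed by your sketch, and a discriminant analysis of hypothetical destabilisers ``as in Proposition \ref{prop.1}'' is not how it goes; indeed the numerical hypotheses $n\ge10$, $L^2\ge-2n+5$ play no real role there. The paper's mechanism is different and cleaner: at $b=b_1=-\frac1{H^3}$ the quantity $D_1(E)=\ch_1(E).H^2+\ch_0(E)$ is a non-negative integer on $\cA(b_1)$, additive in K-theory, and equals its minimal nonzero value $1$ on $F_1$; hence any destabiliser would have $D_1=0$, i.e.\ $\nu=+\infty$, so $F_1$ has \emph{no} walls on the line $b=b_1$ and is stable there for all $w>\frac{b_1^2}2$ (Lemma \ref{large 1}). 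Since walls of $F_1$ are lines through $\Pi(F_1)=(0,\frac{\ch_2(F_1).H}{H^3})$, taking $w\downarrow\frac{b_1^2}2$ gives semistability along the whole line $\ell_2$ joining $\bigl(-\frac1{H^3},\frac1{2(H^3)^2}\bigr)$ to $\Pi(F_1)$, and $(b^*,w^*)$ lies on $\ell_2$. Without this (or an equivalent) argument your application of Conjecture \ref{conjecture} to $F_1$ is unjustified.

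The second gap is the complementary bound. The inequality at $(b^*,w^*)$ gives an \emph{upper} bound on $\ch_3(F_1)$ (your statement that it yields a lower bound has the sign backwards), so the contradiction must come from a lower bound, i.e.\ an upper bound on $\ch_3(F_2(n))$ --- and this is where $L^2>-2n+4$ enters via $\ch_3(\iota_*L)=\frac{L^2}2+\frac{n^3}6H^3$. Your plan to control $\ch_3(\cH^{-1}(F_2))$ by ``another BG-type inequality for $\mu\_H$-semistable sheaves'' does not work: the classical Bogomolov inequality bounds no $\ch_3$, and \emph{\ref{BG2}} licenses the conjectural inequality only for the specific classes and parameters listed, which $\cH^{-1}(F_2)$ (with $\ch_1.H^2=-nH^3$, and with $\ch_2.H$ not yet known to lie in $\{-1,-2\}$) does not satisfy. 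Moreover you cannot yet use that $\cH^0(F_2)$ is $0$-dimensional --- at this stage only $\dim\le1$ is known, since Lemma \ref{3} is proved \emph{using} Proposition \ref{prop.case ii} --- and the $\ch_3$ of a $1$-dimensional sheaf is unbounded in the direction you need. The paper instead runs the same minimal-denominator argument for $F_2$ at $b_2=-n+\frac1{H^3}$ (Lemma \ref{large 2}), passes to the shifted derived dual $F_2(n)^\vee[1]$, which by \cite[Lemma 5.1.3(b)]{BMT} differs from a torsion-free rank-one sheaf $F$ (with $\ch_1(F).H^2=0$, $\ch_2(F).H=c$) by a $0$-dimensional sheaf $Q$, applies Proposition \ref{first} to $F$, and uses $\ch_3(F_2(n))=\ch_3(F)-\ch_3(Q)\le\ch_3(F)$ to get Proposition \ref{second}; the final bookkeeping then rules out $c=-1,-2$. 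So the overall architecture of your proposal is right for the subobject, but both the semistability input for $F_1$ and the entire quotient-side estimate are missing or would fail as sketched.
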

 
	Proposition \ref{prop.1} proves this in case (i). We will prove Proposition \ref{prop.case ii} in case (ii) in Section \ref{last section} by applying the Bogomolov-Gieseker conjecture \ref{conjecture} to $F_1$ and $F_2$. This gives upper bounds for $\ch_3(F_1)$ and $\ch_3(F_2)$ respectively. In turn the latter gives a lower bound for $\ch_3(F_1)$. If we work only at $(b_0,w_0)$, as in \cite{TodaBG}, the bounds are not optimal, but by working at more general points of the  $(b,w)$-plane we get stronger bounds which together force $\ch_2(F_1).H = 0$.  

	\begin{Lem}\label{3}
Under the assumptions of Proposition \ref{prop.1}, $\dim\mathrm{supp}\,\cH^0(F_2)=0$ and
$$
\ch_1(\cH^{-1}(F_2))\ =\ -nH \quad\mathrm{in\ } H^2(X,\Q).
$$
	\end{Lem}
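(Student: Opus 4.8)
The plan is to feed the long exact cohomology sequence of the destabilising sequence $F_1\hookrightarrow\iota_*L\twoheadrightarrow F_2$ into three elementary positivity inputs: the $H$-degree of a nonzero effective $1$-cycle is strictly positive; the rank-one Bogomolov inequality $\ch_2(E).H\le\tfrac12 c_1(E)^2.H$ holds for every torsion-free sheaf $E$ of rank $1$ (write $E$ as a line bundle twisted by the ideal sheaf of a subscheme of codimension $\ge2$); and, by the Hodge index theorem, the form $\gamma\mapsto\gamma^2.H$ is negative definite on $\{\gamma\in\mathrm{NS}(X)_{\mathbb R}:\gamma.H^2=0\}$. I would use Proposition~\ref{prop.case ii} throughout, so that $x=0$, $w_0=\tfrac{n^2}4$ and the wall $\ell$ is the line of slope $-\tfrac n2$ through the origin; this is precisely what makes the two inequalities below tight at the same time.

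By \eqref{ch} and $L.H=0$ we have $\ch_1(\iota_*L)=nH$ and $\ch_2(\iota_*L).H=-\tfrac{n^2}2H^3$. Since $F_1$ is a sheaf in degree $0$ (Proposition~\ref{prop.1}), the destabilising sequence gives the exact sequence of coherent sheaves
\[
0\To\cH^{-1}(F_2)\To F_1\To\iota_*L\To\cH^0(F_2)\To0 ,
\]
and since $\dim\mathrm{supp}\,\cH^0(F_2)\le1$ forces $\ch_0(\cH^0(F_2))=0$ and $\ch_1(\cH^0(F_2))=0$, comparing total Chern characters and using $\ch_1(F_1).H^2=0$ and $\ch_2(F_1).H=0$ (Propositions~\ref{prop.1} and~\ref{prop.case ii}) yields $\ch_0(\cH^{-1}(F_2))=1$, $c_1(\cH^{-1}(F_2)).H^2=-nH^3$, and
\[
\ch_2(\cH^0(F_2)).H\ =\ \ch_2(\cH^{-1}(F_2)).H+\ch_2(\iota_*L).H\ =\ \ch_2(\cH^{-1}(F_2)).H-\tfrac{n^2}2H^3 .
\]

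Because $F_2\in\cA(b_0)$ the sheaf $\cH^{-1}(F_2)$ is torsion-free, and it has rank $1$; writing $c_1(\cH^{-1}(F_2))=-nH+\beta$ with $\beta.H^2=0$, the rank-one Bogomolov inequality together with $\beta^2.H\le0$ (Hodge index) gives
\[
\ch_2(\cH^{-1}(F_2)).H\ \le\ \tfrac12 c_1(\cH^{-1}(F_2))^2.H\ =\ \tfrac12\big(n^2H^3+\beta^2.H\big)\ \le\ \tfrac{n^2}2H^3 ,
\]
so the preceding display gives $\ch_2(\cH^0(F_2)).H\le0$. On the other hand $\cH^0(F_2)$ is an honest coherent sheaf of dimension $\le1$, so $\ch_2(\cH^0(F_2))$ is an effective $1$-cycle and $\ch_2(\cH^0(F_2)).H\ge0$. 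Hence $\ch_2(\cH^0(F_2)).H=0$; as $H$ is ample this forces the effective cycle $\ch_2(\cH^0(F_2))$ to vanish, i.e. $\dim\mathrm{supp}\,\cH^0(F_2)=0$. Feeding this equality back up the chain of inequalities forces $\ch_2(\cH^{-1}(F_2)).H=\tfrac{n^2}2H^3$ and hence $\beta^2.H=0$; combined with $\beta.H^2=0$, the Hodge index theorem gives $\beta=0$ in $H^2(X,\mathbb Q)$, whence $\ch_1(\cH^{-1}(F_2))=c_1(\cH^{-1}(F_2))=-nH$ in $H^2(X,\mathbb Q)$.

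The only delicate point is arranging the rank-one Bogomolov bound and the Hodge index bound to be tight simultaneously, and this is exactly where $x=0$ from Proposition~\ref{prop.case ii} is used: without it the same computation only yields $\ch_2(\cH^0(F_2)).H\le-\ch_2(F_1).H\in\{0,1,2\}$, which is too weak to kill the support of $\cH^0(F_2)$ or to pin $c_1(\cH^{-1}(F_2))$ down to $-nH$.
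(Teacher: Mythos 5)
Your proof is correct and follows essentially the same route as the paper: the long exact cohomology sequence of $F_1\hookrightarrow\iota_*L\twoheadrightarrow F_2$, the rank-one Bogomolov inequality for the torsion-free sheaf $\cH^{-1}(F_2)$, the Hodge index theorem, and the effectivity of the $1$-cycle $\ch_2(\cH^0(F_2))$, with Proposition \ref{prop.case ii} killing $\ch_2(F_1).H$. Writing $c_1(\cH^{-1}(F_2))=-nH+\beta$ with $\beta.H^2=0$ is just a repackaging of the paper's Cauchy--Schwarz form of the Hodge index argument, so the two proofs are the same in substance.
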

\begin{proof}
By Proposition \ref{prop.1}, $F_2$ has rank 1 and lies in $\cA(b_0)$ \eqref{Abdef}, so $\cH^{-1}(F_2)$ is a torsion-free rank one sheaf. Therefore it is $\mu\_H$-semistable and the classical Bogomolov inequality says 
	\begin{equation}\label{condition 2}
	\ch_1(\cH^{-1}(F_2))^2.H -2\ch_2(\cH^{-1}(F_2)).H\ \geq\ 0. 
	\end{equation}
	From the exact sequence \eqref{long exact} we calculate $\ch_i(\cH^{-1}(F_2))=\ch_i(F_1)-\ch_i(\iota_*L)+\ch_i(\cH^0(F_2))$. 	Taking $i=2$ and intersecting with $H$, Proposition \ref{prop.case ii} kills the first term while \eqref{ch} and $L.H=0$ calculate the second, yielding
	\begin{equation}\label{ch2}
	\ch_2(\cH^{-1}(F_2)).H\ =\ \frac{n^2H^3}{2} + \ch_2(\cH^0(F_2)).H.
	\end{equation}
Taking $i=1$ and intersecting with $H^2$, Proposition \ref{prop.1} kills the first and third terms, giving
	$$
	\ch_1(\cH^{-1}(F_2)).H^2\ =\ -nH^3.
	$$
So by the Hodge index theorem
\begin{equation}\label{ch11}
n^2H^3\ =\ \frac{\big(\!\ch_1(\cH^{-1}(F_2)).H^2\big)^2}{H^3}\ \ge\ \ch_1(\cH^{-1}(F_2))^2.H,
\end{equation}
\emph{with equality if and only if $\ch_1(\cH^{-1}(F_2))$ is a multiple of $H$ in $H^2(X,\Q)$.}

Combining \eqref{condition 2}, \eqref{ch2} and \eqref{ch11} gives
\begin{equation}\label{<>0}
-2\ch_2(\cH^0(F_2)).H\ \ge\ 0.
\end{equation}
But Proposition \ref{prop.1} also showed that $\cH^0(F_2)$ is supported in dimension $\le1$, so \eqref{<>0} shows it must have 0-dimensional support and (\ref{<>0}, \ref{ch11}) are equalities. Thus $\ch_1(\cH^{-1}(F_2))$ is a multiple of $H$ in $H^2(X,\Q)$.

To determine the multiple we calculate from the sequence \eqref{long exact} that $\ch_1(\cH^{-1}(F_2)).H^2=\ch_1(F_1).H^2-\ch_1(\iota_*L).H^2$. The former is zero by Proposition \ref{prop.1} and the second is $nH^3$.
	\end{proof}
	
So $\cH^0(F_2)$ is supported in dimension 0 and is a quotient of $\iota_*L$ by \eqref{long exact}. Thus there is a 0-dimensional subscheme $Z\subset D$ with ideal sheaf $I_Z$ \emph{on $D$} such that \eqref{long exact} simplifies to
		\begin{equation}\label{long exact2}
0 \To \cH^{-1}(F_2) \To F_1 \To \iota_*(L\otimes I_Z) \To 0,\end{equation}
where $\cH^{-1}(F_2)$ and $F_1$ are rank 1 torsion free sheaves. 
By Lemma \ref{3} there is a dim $\le1$ subscheme $C\subset X$ such that
\begin{equation}\label{TC}
\cH^{-1}(F_2)\ \cong\ T(-n)\otimes I_C
\end{equation}
for some line bundle $T$ with $c_1(T)=0\in H^2(X,\Q)$. Rotating the exact triangle \eqref{long exact2}, we get a short exact sequence in $\cA(b_0)$:
		\begin{equation}\label{surj1}
		0 \To F_1 \To \iota_*(L\otimes I_Z) \To T(-n)\otimes I_C[1] \To 0.
		\end{equation}
In fact any rank zero sheaf such as $\iota_*(L\otimes I_Z)$ lies in the heart $\mathcal{A}(b_0)$. Since $T(-n)$ is a line bundle, it is a $\mu\_H$-semistable sheaf of the same slope as $\cH^{-1}(F_2)$, and thus its shift by $[1]$ lies in $\mathcal{A}(b_0)$ because $F_2$ does. By the same reasoning,
		\begin{equation}\label{sec}
		0 \To T(-n)\otimes\cO_C \rightarrow T(-n)\otimes I_C[1] \To T(-n)[1] \To 0
		\end{equation}
		is also a short exact sequence in $\mathcal{A}(b_0)$.

%

\section{Proof of main Theorem}
We are now ready to prove Theorem \ref{main}. We compose the $\cA(b_0)$-surjections (the third arrows) of \eqref{surj1} and \eqref{sec} to give
$$
\iota_*(L\otimes I_Z)\To T(-n)[1].
$$ 
Since this is a surjection in $\cA(b_0)$, it is a nonzero element of 
\begin{equation}\label{noZ}
\Ext^1(\iota_*(L\otimes I_Z),T(-n))\ \cong\ 
\Ext^1(\iota_*L,T(-n))\ \cong\ \Hom(L,T|_D).
\end{equation}
(The first isomorphism follows from $\Ext^{<3}(\cO_Z,T(-n))=0$, by $\dim Z=0$, and the second from relative Serre duality for $\iota$.) Thus $L^* \otimes T|_D$ is effective. Since $L.H=0$ this implies $L=T|_D$. In particular, $c_1(L)=0$ in $H^2(D,\Q). \hfill\square$

\begin{Rem}
In fact, calculating $\ch_2(F_1).H$ from \eqref{long exact2} and \eqref{TC} gives $-H.C$, which by Proposition \ref{prop.case ii} is zero. Therefore both $C$ and $Z$ are 0-dimensional and the $\nu\_{b,w}$ slopes of $T(-n)\otimes I_C$ and $\iota_*(L\otimes I_Z)$ are the same as those of $T(-n)$ and $\iota_*L$ respectively. Thus the map $\iota_*L\to T(-n)[1]$ produced in \eqref{noZ} also destabilises in $\cA(b)$ on the first wall. That is, 
$$
0\To\cO(-n)\To\cO\To\cO_D\To0
$$
-- tensored with $T$ and rotated -- gives the destabilising short exact sequence in $\cA(b)$.
\end{Rem}
	\section{Destabilising objects in case $\ii$}\label{last section}
What remains is to prove Proposition \ref{prop.case ii} in case (ii). So we assume \ref{BG2} holds, $n\ge10$ and $L^2\ge-2n+5$. 
By Proposition \ref{prop.1}, in $\cA(b)$ there is a destabilising sequence $F_1 \hookrightarrow \iota_*L \twoheadrightarrow F_2$  for $\iota_*L$ along the wall $\ell$ with equation
	$$
	w\ =\ -\frac{n}{2} b + \frac{\ch_2(F_1).H}{H^3}\,.
	$$ 
Moreover $\rk F_1=1=-\rk F_2$, and, by Proposition \ref{prop.1}, 
   \begin{equation*}
\ch_1(F_1).H^2 = 0 \quad\text{and}\quad \ch_2(F_1).H\in\{0,-1,-2\}. 
   \end{equation*}
   We will assume that $\ch_2(F_1).H\ne0$ and apply the Bogomolov-Gieseker inequality to $F_1$ and $F_2$ to get a contradiction. \medskip
  
It will be convenient to work with $b=b_1:= -\frac{1}{H^3}$ because then, by \eqref{noo},
$$
\nu\_{b_1,w}(E)\ =\ \frac{\ch_2(E).H - w\ch_0(E)H^3}{\ch_1(E).H^2+\ch_0(E)}
$$
has a denominator $D_1(E):=\ch_1(E).H^2+\ch_0(E)$ which
\begin{itemize}
\item is integral and $\ge0$ for $E\in\cA(b_1)$,
\item is additive on K-theory classes: $D_1(E_1+E_2)=D_1(E_1)+D_1(E_2)$, and
\item takes the minimal nonzero value $1$ on $F_1$.
\end{itemize}
This means that in $\cA(b_1)$ the object $F_1$ can only be destabilised by objects with denominator $D_1=0$.\footnote{This argument is familiar from the analogous fact that rank 1 sheaves can only be destabilised by rank 0 torsion sheaves when working with slope (for which the denominator is rank).} Such objects have $\nu=+\infty$ so, in particular, $F_1$ can never be semi-destabilised: it is either stable or strictly unstable, and has no walls of instability. Since it is semistable on $\ell$, and this intersects $b=b_1$ at the point
$$
w_1\ =\ \frac{n}{2H^3} + \frac{\ch_2(F_1).H}{H^3}
$$
which defines a weak stability condition in $U$ by
$$
w_1 - \frac{{b}_1^2}{2}\ =\  \frac{n}{2H^3} - \frac{1}{2(H^3)^2} + \frac{\ch_2(F_1).H}{H^3}\ \ge\ \frac{nH^3-1-4H^3}{2(H^3)^2}\ >\ 0,
$$
we conclude the following.

	\begin{Lem}\label{large 1}
	    The destabilising sheaf $F_1$ is $\nu\_{b_1, w}$-stable for any $w > \frac{b_1^2}{2}. \hfill\square$
	\end{Lem}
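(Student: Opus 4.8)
The plan is to leverage the special arithmetic of the line $b=b_1=-\tfrac1{H^3}$ recorded just above: the denominator $D_1(E)=\ch_1(E).H^2+\ch_0(E)$ of $\nu_{b_1,\cdot}$ is additive on $K(X)$, integral and non-negative on $\cA(b_1)$, and $D_1(F_1)=1$ by Proposition~\ref{prop.1} (since $\ch_0(F_1)=1$ and $\ch_1(F_1).H^2=0$). So $F_1$ occupies the minimal nonzero denominator --- the exact analogue of a rank-one sheaf for ordinary $\mu\_H$-slope --- and this rigidity is what produces both stability and the absence of walls.

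I would start from the one external input: $F_1$ is $\nu_{b_1,w_1}$-semistable. This holds because $F_1\hookrightarrow\iota_*L$ has the same $\nu_{b,w}$-slope as $\iota_*L$ all along $\ell\cap U$, along which $\iota_*L$ is $\nu_{b,w}$-semistable by Proposition~\ref{locally finite set of walls}(c), and a subobject of a semistable object of equal slope is itself semistable; the point $(b_1,w_1)$ lies on $\ell$ and inside $U$ by the computation $w_1-\tfrac{b_1^2}2>0$ carried out above.

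The crux is a destabilisation dichotomy. Given any short exact sequence $0\to G\to F_1\to Q\to 0$ in $\cA(b_1)$ with $G,Q\neq0$, additivity and non-negativity of $D_1$ force $\{D_1(G),D_1(Q)\}=\{0,1\}$. If $D_1(G)=0$ then $\nu_{b_1,w}(G)=+\infty$, which exceeds the finite value $\nu_{b_1,w}(F_1)$ (finite since $D_1(F_1)=1\neq0$), contradicting $\nu_{b_1,w}$-semistability of $F_1$; hence $D_1(G)=1$, $D_1(Q)=0$, so $\nu_{b_1,w}(Q)=+\infty>\nu_{b_1,w}(F_1)$ \emph{strictly}. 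Thus whenever $F_1$ is $\nu_{b_1,w}$-semistable it is in fact $\nu_{b_1,w}$-stable, and in particular $F_1$ is never \emph{strictly} $\nu_{b_1,w}$-semistable, for any $w>\tfrac{b_1^2}2$.

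Finally I would propagate along the vertical line $b=b_1$. By Proposition~\ref{locally finite set of walls}(c), every wall of instability of $F_1$ is a locus on which $F_1$ becomes strictly semistable; since that never occurs at $b=b_1$, no wall of $F_1$ meets this line, so by part~(b) the $\nu_{b_1,\cdot}$-stability of $F_1$ is constant on $\{b_1\}\times(\tfrac{b_1^2}2,\infty)$, and combined with $\nu_{b_1,w_1}$-stability this gives the claim. There is no serious obstacle here --- the argument is entirely driven by the denominator arithmetic already in place --- but the implication deserving a little care is ``never strictly semistable at $b=b_1$ $\Rightarrow$ no wall meets $b=b_1$ $\Rightarrow$ stability constant along the line'', together with the routine observation (from the inequalities behind \eqref{two conditions}) that $F_1\in\cA(b_1)$, so that all of these stability notions are the intended ones.
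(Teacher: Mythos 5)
Your argument is correct and is essentially the paper's own proof: the paragraph preceding the Lemma makes exactly this point --- $D_1$ is integral, additive, non-negative on $\cA(b_1)$ and equals $1$ on $F_1$, so $F_1$ can only be (semi-)destabilised by objects of slope $+\infty$, hence is never strictly semistable at $b=b_1$ and has no walls there, while its semistability at $(b_1,w_1)\in\ell\cap U$ is inherited from the wall $\ell$ and the computation $w_1-\frac{b_1^2}{2}>0$. Your write-up simply spells out the two steps the paper leaves implicit, namely why $F_1$ is semistable on $\ell$ (equal-slope subobject of the semistable $\iota_*L$, via Proposition \ref{locally finite set of walls}(c)) and the propagation of stability along the line $b=b_1$ using parts (b) and (c) of that proposition.
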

  
  Similarly if we work with $b=b_2:= -n + \frac{1}{H^3}$ then the denominator of $\nu\_{b_2,w}$ is
  $$
  D_2(E)\ :=\ \ch_1(E(n)).H^2-\ch_0(E).
  $$
This has the same properties as $D_1(E)$, except the third is replaced now by $D_2(F_2)=1$ being minimal. Again $\ell$ intersects $b=b_2$ in a point
$$  
  w_2\ =\ \frac{n^2}{2} - \frac{n}{2H^3} + \frac{\ch_2(F_1).H}{H^3} 
$$
inside the space $U$ of weak stability conditions, by
$$
w_2 - \frac{{b}_2^2}{2}\ =\  \frac{n}{2H^3} - \frac{1}{2(H^3)^2} + \frac{\ch_2(F_1).H}{H^3}\ \ge\ \frac{nH^3-1-4H^3}{2(H^3)^2}\ >\ 0.
$$
   So the same argument as for Lemma \ref{large 1} gives the following.
	\begin{Lem}\label{large 2}
	   The destabilising quotient $F_2$ is $\nu\_{{b}_2, w}$-stable for any $w > \frac{{b}_2^2}{2}. \hfill\square$ 
	\end{Lem}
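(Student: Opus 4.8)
The plan is to run the proof of Lemma~\ref{large 1} verbatim, with the triple $(b_2,D_2,F_2)$ playing the role of $(b_1,D_1,F_1)$; the computations already made supply everything special to $D_2$ and to the point $(b_2,w_2)$, so it remains only to assemble the pieces.

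First I would record that $F_2\in\cA(b_2)$. By Proposition~\ref{prop.1} the sheaf $\cH^{-1}(F_2)$ is torsion-free of rank one, and since $\ch_1(\cH^{-1}(F_2)).H^2=\ch_1(F_1).H^2-\ch_1(\iota_*L).H^2=-nH^3$ it is $\mu\_H$-semistable of slope $-n\le b_2$, so $\cH^{-1}(F_2)[1]\in\cA(b_2)$; moreover $\cH^0(F_2)$ is supported in dimension $\le 1$, hence torsion, so it too lies in $\cA(b_2)$, and therefore so does the extension $F_2$. As $(b_2,w_2)\in U$, the function $\nu\_{b_2,w}$ is a genuine weak stability condition for every $w>b_2^2/2$, and the wall $\ell$ meets the line $b=b_2$ at $(b_2,w_2)$, where $F_2$ is $\nu\_{b_2,w_2}$-semistable (being a quotient of $\iota_*L$ of the same $\nu$-slope, taken on the wall).

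Now the heart of the matter. Since $D_2$ is integral, additive and non-negative on $\cA(b_2)$ and $D_2(F_2)=1$ is its minimal nonzero value, in every short exact sequence $0\to A\to F_2\to Q\to 0$ in $\cA(b_2)$ exactly one of $D_2(A)$, $D_2(Q)$ vanishes. The crucial claim --- the precise dual of the fact, recalled in the footnote to Lemma~\ref{large 1}, that a torsion-free rank one sheaf has no torsion subsheaf --- is that $F_2$ has \emph{no} nonzero subobject $A$ with $D_2(A)=0$. Granting it, every proper nonzero quotient $Q$ of $F_2$ in $\cA(b_2)$ has $D_2(Q)=0$, so $\nu\_{b_2,w}(Q)=+\infty$; as $D_2(F_2)=1\neq 0$ makes $\nu\_{b_2,w}(F_2)$ finite, this shows $F_2$ is $\nu\_{b_2,w}$-stable for all $w>b_2^2/2$ (equivalently, $F_2$ has no walls along $b=b_2$ and is semistable at $(b_2,w_2)$, hence stable throughout).

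The step I expect to be the real obstacle is the crucial claim, and in it the possibility that $A$ is a nonzero torsion sheaf of dimension $\le 1$; the remaining cases are painless. Indeed, if $\cH^{-1}(A)\neq 0$ then it is a rank one torsion-free subsheaf of $\cH^{-1}(F_2)$, hence of $\mu\_H$-slope $\le -n<b_2$, which already forces $D_2(A)=\ch_1^{b_2H}(A).H^2>0$; and if $\cH^{-1}(A)=0$ with $\rk A>0$ then $\mu_H^-(A)>b_2$ again gives $D_2(A)>0$. To kill the torsion case I would arrange from the outset that $F_2$ is the top Jordan--H\"older factor of $\iota_*L$ on the wall $\ell$, so that $F_2$ is $\nu\_{b_0,w_0}$-\emph{stable}; its kernel $F_1\subset\iota_*L$ is then still a destabilising subobject of $\iota_*L$ on $\ell$, so the argument of Proposition~\ref{prop.1} applies to it unchanged (in particular $\ch_0(F_1)=1$, which in fact holds for any destabilising subobject). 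For such an $F_2$ one first notes that $\cH^{-1}(F_2)$ must be reflexive --- otherwise $(\cH^{-1}(F_2))^{**}/\cH^{-1}(F_2)$ would embed, via $\cH^{-1}(F_2)[1]\hookrightarrow F_2$, as a subobject of $\nu\_{b_0,w_0}$-slope $+\infty$, contradicting stability --- and then that any nonzero torsion subobject of $F_2$ in $\cA(b_2)$ would, for the same reason, already be a subobject of $F_2$ in $\cA(b_0)$ of $\nu\_{b_0,w_0}$-slope $+\infty>\nu\_{b_0,w_0}(F_2)$, again contradicting stability. With the crucial claim in hand, the rest of the proof is a word-for-word transcription of Lemma~\ref{large 1}.
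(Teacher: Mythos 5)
Your setup is right and matches the paper's: the denominator $D_2$ is integral, additive and non-negative on $\cA(b_2)$ with $D_2(F_2)=1$ minimal, $(b_2,w_2)\in U$ lies on the wall $\ell$ where $F_2$ is $\nu$-semistable, and stability follows once no subobject has $D_2=0$. But you have misjudged where the content lies. Your ``crucial claim'' is not an obstacle requiring structure theory: if $A\subset F_2$ in $\cA(b_2)$ had $D_2(A)=0$ then $\nu\_{b_2,w}(A)=+\infty>\nu\_{b_2,w}(F_2)$ for \emph{every} $w>b_2^2/2$ --- there is no wall to cross --- so $A$ would strictly destabilise $F_2$ already at $(b_2,w_2)$, contradicting semistability there. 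That one sentence is the whole proof, and it is exactly the paper's (``the same argument as for Lemma \ref{large 1}'', where $F_1$ ``is either stable or strictly unstable, and has no walls of instability''). You actually write this down in your parenthetical ``(equivalently, \dots)'' and then discard it in favour of the long final paragraph.

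That final paragraph, besides being unnecessary, is where the genuine gaps sit. Replacing $F_1\hookrightarrow\iota_*L\twoheadrightarrow F_2$ by the sub-filtration whose quotient is the top Jordan--H\"older factor changes both $F_1$ and $F_2$; you assert that ``the argument of Proposition \ref{prop.1} applies unchanged'', but that needs checking (e.g.\ that the new $F_1$ has nonzero rank and lies in $\cA(b)$ along all of $\ell\cap U$, so that \eqref{two conditions}--\eqref{in.1} go through), and in any case the rest of Section \ref{last section} uses the numerical conclusions of Proposition \ref{prop.1} for the \emph{chosen} destabilising pair, so you cannot silently swap it. Likewise, the claim that a torsion subobject of $F_2$ in $\cA(b_2)$ ``is already a subobject of $F_2$ in $\cA(b_0)$'' requires verifying that the corresponding quotient lies in $\cA(b_0)$, which you do not do; and the reflexivity of $\cH^{-1}(F_2)$ plays no role in the statement being proved. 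Drop the last paragraph, promote the parenthetical to the main argument, and the proof is the paper's.
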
  

   \begin{Prop}\label{first}
$\displaystyle{   	\ch_3(F_1)\ \leq\ \frac{2}{3} \ch_2(F_1).H\left(\!\ch_2(F_1).H - \frac{1}{2H^3}\right).}$
    \end{Prop}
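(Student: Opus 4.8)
The plan is to feed the sheaf $F_1$ into the Bogomolov--Gieseker inequality (Conjecture~\ref{conjecture}) at the single stability parameter
$$(b^*,w^*),\qquad b^*\ :=\ \ch_2(F_1).H-\tfrac1{2H^3},\qquad w^*\ :=\ (b^*)^2+\tfrac{\ch_2(F_1).H}{H^3},$$
and then to read off the claimed bound. Write $c:=\ch_2(F_1).H$, which lies in $\{-1,-2\}$ (by Proposition~\ref{prop.1} and the assumption $\ch_2(F_1).H\ne0$ in force throughout this section), so that $b^*<0$. A direct computation gives $w^*-\tfrac{(b^*)^2}2=\tfrac12\big(c+\tfrac1{2H^3}\big)^2>0$, so $(b^*,w^*)\in U$, and this choice of $w^*$ is exactly the one making $\ch_2^{b^*H}(F_1).H=\big(w^*-\tfrac{(b^*)^2}2\big)\ch_0(F_1)H^3$ --- the extra hypothesis of Conjecture~\ref{conjecture}. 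Since $F_1$ is a rank one torsion-free sheaf with $\ch_0(F_1)=1$, $\ch_1(F_1).H^2=0$ and $c\in\{-1,-2\}$, this is precisely the class and the stability parameters for which the second bullet of~\ref{BG2} makes Conjecture~\ref{conjecture} available.

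The one point that genuinely needs an argument is that $F_1$ is $\nu\_{b^*,w^*}$-semistable, and this is where I expect the work to be. Note first that $F_1\in\cA(b)$ for every $b<0$, because $\mu\_H(F_1)=0$. By Lemma~\ref{large 1}, $F_1$ is $\nu\_{b_1,w}$-\emph{stable} for all $w>\tfrac{b_1^2}2$, so no wall of instability of $F_1$ meets the ray $\{b=b_1\}\cap U$. By Proposition~\ref{locally finite set of walls}(a) every such wall lies on a line through $\Pi(F_1)=\big(0,\tfrac c{H^3}\big)$, and a line through that point of slope $m$ meets $\{b=b_1\}\cap U$ precisely when $m<b^*$ (rearrange $mb_1+\tfrac c{H^3}>\tfrac{b_1^2}2$ with $b_1=-\tfrac1{H^3}$). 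Hence every wall of $F_1$ has slope $\ge b^*$. Since $b^*<0$, any line through $\Pi(F_1)$ of slope $m\ge b^*$ takes the value $mb^*+\tfrac c{H^3}\le(b^*)^2+\tfrac c{H^3}=w^*$ at $b=b^*$, so $(b^*,w^*)$ lies on or above every wall of $F_1$: it sits in the closed convex region cut out in $U$ by these (locally finitely many) wall-lines, whose interior is a single chamber containing $\{b=b_1\}\cap U$ and on which $F_1$ is stable. Consequently $F_1$ is $\nu\_{b^*,w^*}$-semistable.

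With semistability in hand I would apply Conjecture~\ref{conjecture} at $(b^*,w^*)$, obtaining
$$\ch_3^{b^*H}(F_1)\ \le\ \Big(\tfrac{w^*}{3}-\tfrac{(b^*)^2}{6}\Big)\ch_1^{b^*H}(F_1).H^2,$$
and then expand using $\ch_0(F_1)=1$, $\ch_1(F_1).H^2=0$, $\ch_2(F_1).H=c$: one has $\ch_1^{b^*H}(F_1).H^2=-b^*H^3$, $\ch_3^{b^*H}(F_1)=\ch_3(F_1)-b^*c-\tfrac{(b^*)^3}{6}H^3$, and $\tfrac{w^*}3-\tfrac{(b^*)^2}6=\tfrac{(b^*)^2}6+\tfrac c{3H^3}$, so the $(b^*)^3H^3$ terms cancel and the inequality collapses to $\ch_3(F_1)\le\tfrac23 b^*c$. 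Substituting $b^*=c-\tfrac1{2H^3}$ gives exactly $\ch_3(F_1)\le\tfrac23\,\ch_2(F_1).H\big(\ch_2(F_1).H-\tfrac1{2H^3}\big)$.

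The calculations in the first and last paragraphs are routine; the crux is the middle step. Everything rests on Lemma~\ref{large 1}: stability of $F_1$ along the single line $b=b_1$ is exactly strong enough to force every wall of the rank one sheaf $F_1$ to have slope $\ge b^*$, and that is exactly strong enough to place $(b^*,w^*)$ in the closure of the large-volume chamber of $F_1$. The delicate points I would watch are locating $(b^*,w^*)$ correctly relative to the wall-and-chamber picture of $F_1$, and checking that $F_1$ remains in the heart $\cA(b)$ throughout the relevant range of $b$.
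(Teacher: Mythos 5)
Your proof is correct and follows essentially the same route as the paper: you use Lemma \ref{large 1} together with the wall geometry of Proposition \ref{locally finite set of walls} to place $(b^*,w^*)$ in the closure of the chamber where $F_1$ is stable, then apply Conjecture \ref{conjecture} via \ref{BG2} at that point and simplify to $\ch_3(F_1)\le\frac23\,b^*\ch_2(F_1).H$. The paper phrases the semistability step as the $w\downarrow\frac{b_1^2}{2}$ limit of Lemma \ref{large 1} along the line $\ell_2$ of slope $b^*$ through $\Pi(F_1)$, which is exactly your observation that every wall of $F_1$ has slope at least $b^*$.
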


\begin{proof}
Recall the line $\{b=b_1\}\cap U$ used in Lemma \ref{large 1}. Its base on $w=\frac{b^2}2$ is the point $\big(\!-\frac1{H^3},\frac1{2(H^3)^2}\big)$. Let $\ell_2$ denote the line connecting this point to $\Pi(F_1)=\big(0,\frac{\ch_2(F_1).H}{H^3}\big)$,
\begin{equation}\label{ell2}
   	w\ =\ \left(\ch_2(F_1).H - \frac{1}{2H^3} \right) b + \frac{\ch_2(F_1).H}{H^3}\,.
\end{equation}
   \begin{figure}[h]
	\begin{centering}
		\definecolor{zzttqq}{rgb}{0.27,0.27,0.27}
		\definecolor{qqqqff}{rgb}{0.33,0.33,0.33}
		\definecolor{uququq}{rgb}{0.25,0.25,0.25}
		\definecolor{xdxdff}{rgb}{0.66,0.66,0.66}
		
		\begin{tikzpicture}[line cap=round,line join=round,>=triangle 45,x=1.0cm,y=1.0cm]
		
		\draw[->,color=black] (-4,0) -- (4,0);
		\draw  (4, 0) node [right ] {$b,\,\frac{\ch_1.H^2}{\ch_0H^3}$};


		\fill [fill=gray!40!white] (0,0) parabola (2.95,3.87) parabola [bend at end] (-2.95,3.87) parabola [bend at end] (0,0);
		
		\draw  (0,0) parabola (3,4); 
		\draw  (0,0) parabola (-3,4); 
		\draw  (2.1, 1.9) node [right] {$w= \frac{b^2}{2}$};
		
		\draw  (0,-1) [color=red]parabola (2,4); 
		\draw  (0,-1) [color=red] parabola (-2,4); 
		\draw  (.4,-.8) node [right] [color=red]{$w= b^2 + \frac{\ch_2(F_1).H}{H^3}$};
		
		

		\draw[->,color=black] (0,-2) -- (0,5);
		\draw  (0, 5) node [above ] {$w,\,\frac{\ch_2.H}{\ch_0H^3}$};
		
	   \draw[color=black] (0.3,-1.5) -- (-3,4);
	   \draw[color=black, dashed] (-.73,0) -- (-.73,4);
	 \draw  (-.65, 3) node [left] {$b\!=$};
	 \draw  (-.8, 3) node [right] {$\frac{-1}{H^3}$};
		
		\draw[color=black, dashed] (-1.32,1.23) -- (0,1.23);
		\draw[color=black, dashed] (-1.32,1.23) -- (-1.32,0);

		\draw  (0.25,-1.6) node [right] {$\ell_2$};
		\draw  (1, 3) node [above] {\Large $U$};
		\draw  (0.1, -1.3) node [left] {$\Pi(F_1)$};
		\draw  (-1.32,0) node [below] {$b^*$};
		\draw  (0,1.23) node [right] {$w^*$};
		
		\begin{scriptsize}
		\fill (0, -1) circle (2pt);
		\fill (-.73,.24) circle (2pt);
		\fill (-1.32,1.23) circle (2pt);
		\fill (-1.32,0) circle (2pt);
		\fill (0,1.23) circle (2pt);

		
		
		\end{scriptsize}
		
		\end{tikzpicture}
		
		\caption{The first wall for the sheaf $F_1$}
		
		\label{first-wall}
		
	\end{centering}
\end{figure}
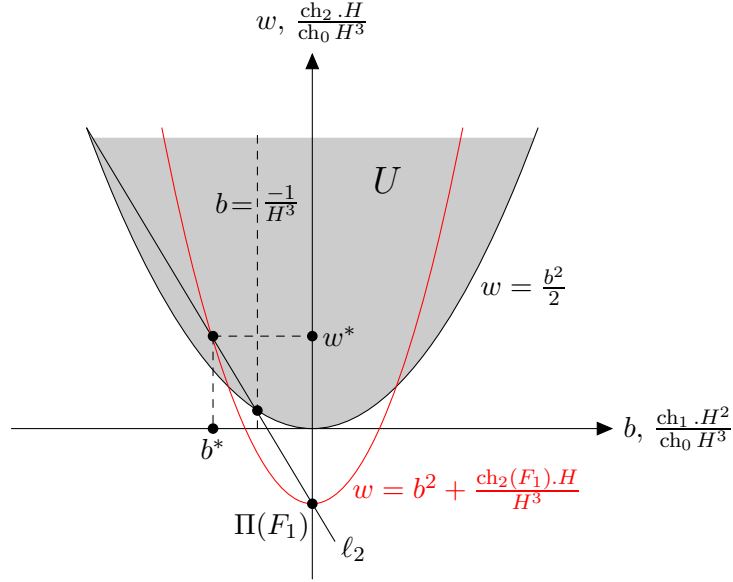	
	
	By the description of the walls of instability (Proposition \ref{locally finite set of walls}), the $w\downarrow\frac{b_1^2}2$ limit of Lemma \ref{large 1} therefore shows that $F_1$ is $\nu\_{b,w}$-semistable for any $(b,w)\in\ell_2\cap U$; see Figure \ref{first-wall}.
	
To apply the Bogomolov-Gieseker Conjecture \ref{conjecture} to $F_1$ on $\ell_2$ we need to find a point of $\ell_2\cap U$ satisfying $\ch_2^{bH}(F_1).H = \left(w- \frac{b^2}{2}\right)\ch_0(F_1)H^3$, i.e.
$$
   	\frac{\ch_2(F_1).H}{H^3} + \frac{b^2}{2}\ =\ w -\frac{b^2}{2}\,.  
$$
This intersects $\ell_2$ \eqref{ell2}
at the point $(b^*,w^*)$, where
\begin{equation*}
   	b^*\ =\ \ch_2(F_1).H - \frac{1}{2H^3} \quad\text{and}\quad w^*\ =\ \big(\!\ch_2(F_1).H\big)^2+ \frac1{4(H^3)^2}\,.
   	\end{equation*}
$\nu\_{b^*,w^*}$ is a weak stability condition since $w^*-\frac{(b^*)^2}2=\frac12\big(\!\ch_2(F_1).H+\frac1{2H^3}\big)^2>0$, so by \ref{BG2} we may apply Conjecture \ref{conjecture} to give
   	\begin{align*}
   		\ch_3(F_1) - b^*\ch_2(F_1).H - \frac{(b^*)^3 H^3}{6} &\ \leq\  \frac{1}{3}\left(w^*- \frac{(b^*)^2}{2}\right)(-{b^*}H^3)\\
   		&\ =\ \frac{1}{3}\left(\frac{\ch_2(F_1).H}{H^3} + \frac{(b^*)^2}{2}\right)(-{b^*}H^3).
   	\end{align*}
Simplifying gives
   	\begin{equation*}
\ch_3(F_1)\ \leq\ \frac{2}{3}\,b^*\ch_2(F_1).H. \qedhere
   	\end{equation*}
\end{proof}

   \begin{Prop}\label{second}
$\displaystyle{\ch_3(F_2(n))\ \leq\ \frac{2}{3} \ch_2(F_2(n)).H\left(\!\ch_2(F_2(n)).H + \frac{1}{2H^3}\!\right)\!.}$ 
   \end{Prop}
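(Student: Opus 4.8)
The plan is to repeat the argument of Proposition~\ref{first}, but with the pair $\bigl(F_1,\text{Lemma~\ref{large 1}}\bigr)$ replaced by $\bigl(F_2(n),\text{Lemma~\ref{large 2}}\bigr)$; the two set‑ups are mirror images of one another under $b\mapsto -b$, after the twist by $\cO_X(n)$. First I would record the numerics. From \eqref{ch}, $L.H=0$, the destabilising sequence $F_1\hookrightarrow\iota_*L\twoheadrightarrow F_2$ and Proposition~\ref{prop.1} (which gives $\ch_0(F_1)=1$, $\ch_1(F_1).H^2=0$, $\ch_1(\cH^0(F_2)).H^2=0$) one computes
\[
\ch_0(F_2(n))=-1,\qquad \ch_1(F_2(n)).H^2=0,\qquad \ch_2(F_2(n)).H=-\ch_2(F_1).H\in\{1,2\},
\]
so $\Pi(F_2(n))=\bigl(0,-\tfrac{\ch_2(F_2(n)).H}{H^3}\bigr)$ lies strictly below $\partial U$. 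By Lemma~\ref{large 2}, $F_2$ is $\nu\_{b_2,w}$-stable for every $w>\tfrac{b_2^2}{2}$, with $b_2=-n+\tfrac1{H^3}$; since $(-)\otimes\cO_X(n)$ is an autoequivalence taking $\cA(b_2)$ to $\cA(b_2+n)=\cA(\tfrac1{H^3})$ and $\nu\_{b_2,\bullet}$-stability to $\nu\_{1/H^3,\bullet}$-stability, $F_2(n)$ is $\nu\_{1/H^3,w}$-stable for all $w>\tfrac1{2(H^3)^2}$. As in the proof of Lemma~\ref{large 2} the denominator $D_2$ is minimal on $F_2$, so $F_2(n)$ has no walls in $U$.

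Let $\ell'$ be the line through $\bigl(\tfrac1{H^3},\tfrac1{2(H^3)^2}\bigr)$ and $\Pi(F_2(n))$, namely
\[
\ell'\colon\qquad w=\Bigl(\ch_2(F_2(n)).H+\tfrac1{2H^3}\Bigr)b-\tfrac{\ch_2(F_2(n)).H}{H^3}.
\]
Letting $w\downarrow\tfrac1{2(H^3)^2}$ on the vertical line $b=\tfrac1{H^3}$ and using Proposition~\ref{locally finite set of walls} together with the absence of walls for $F_2(n)$, the sheaf $F_2(n)$ is $\nu\_{b,w}$-semistable along $\ell'\cap U$. The line $\ell'$ meets the parabola $\ch_2^{bH}(F_2(n)).H=\bigl(w-\tfrac{b^2}{2}\bigr)\ch_0(F_2(n))H^3$ at $(b^{**},w^{**})$ with
\[
b^{**}=\ch_2(F_2(n)).H+\tfrac1{2H^3},\qquad w^{**}=\bigl(\ch_2(F_2(n)).H\bigr)^2+\tfrac1{4(H^3)^2},
\]
and $w^{**}-\tfrac{(b^{**})^2}{2}=\tfrac12\bigl(\ch_2(F_2(n)).H-\tfrac1{2H^3}\bigr)^2>0$ shows $(b^{**},w^{**})\in U$. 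Applying Conjecture~\ref{conjecture} to $F_2(n)$ at $(b^{**},w^{**})$, and using $\ch_1^{b^{**}H}(F_2(n)).H^2=b^{**}H^3$ together with $w^{**}=(b^{**})^2-\tfrac{\ch_2(F_2(n)).H}{H^3}$, the inequality collapses exactly as in the last display of the proof of Proposition~\ref{first} to $\ch_3(F_2(n))\le\tfrac23\,b^{**}\ch_2(F_2(n)).H$, which is the assertion.

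The delicate point — and the one I expect to be the main obstacle — is the appeal to \ref{BG2} in that last step: $F_2(n)$ has rank $-1$, and so is not literally among the numerical types for which \ref{BG2} asserts Conjecture~\ref{conjecture}. The natural way around this is derived duality: one sets $G:=R\mathcal{H}om(F_2(n),\cO_X)[1]=R\mathcal{H}om(F_2,\cO_X)[1]\otimes\cO_X(-n)$, whose Chern character has $\ch_0(G)=1$, $\ch_1(G).H^2=0$, $\ch_2(G).H=-\ch_2(F_2(n)).H\in\{-1,-2\}$ and $\ch_3(G)=\ch_3(F_2(n))$, and which, by Lemma~\ref{large 2} and the duality and twist equivalences, is $\nu\_{-1/H^3,w}$-stable for all $w$; thus $G$ is numerically and stability‑theoretically a copy of $F_1$, the proof of Proposition~\ref{first} applies to it verbatim, and Conjecture~\ref{conjecture} for $F_2(n)$ at $(b^{**},w^{**})$ becomes Conjecture~\ref{conjecture} for $G$ at the reflected point $\bigl(\ch_2(G).H-\tfrac1{2H^3},\,(\ch_2(G).H)^2+\tfrac1{4(H^3)^2}\bigr)$, which is exactly the pair $(b^*,w^*)$ in the second bullet of \ref{BG2}. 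What needs care here is that $G$ is a priori a complex rather than a torsion‑free sheaf, so one must either verify it is a sheaf in this situation or invoke the proved cases of Conjecture~\ref{conjecture} for arbitrary $\nu\_{b,w}$-semistable objects of the given numerical type; modulo this, only the two rounds of Chern‑character arithmetic remain.
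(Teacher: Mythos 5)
Your proposal follows the same route as the paper --- reduce to Proposition \ref{first} by dualising --- and your Chern character arithmetic, the mirror-image wall geometry, and the identification of the reflected point with the pair $(b^*,w^*)$ in the second bullet of \ref{BG2} are all correct. But the issue you flag at the end is a genuine gap, and neither of the two escape routes you offer is the one that works: the shifted derived dual $G=F_2(n)^{\vee}[1]$ is in general \emph{not} a sheaf (e.g.\ the zero-dimensional sheaf $\cH^0(F_2)$ contributes an $\mathcal{E}xt^3$ term), and invoking Conjecture \ref{conjecture} for arbitrary $\nu_{b,w}$-semistable objects of that numerical type would amount to strengthening the hypothesis \ref{BG2}, which is deliberately stated only for rank-one torsion-free sheaves.

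The paper closes the gap with \cite[Lemma 5.1.3(b)]{BMT}: since $F_2(n)$ is $\nu_{-b_1,w}$-semistable for $w\gg0$ (Lemma \ref{large 2} twisted by $\cO(n)$), its shifted derived dual sits in an exact triangle $F\to F_2(n)^{\vee}[1]\to Q[-1]$ with $Q$ a \emph{zero-dimensional} sheaf and $F$ a $\nu_{b_1,w}$-semistable object of $\cA(b_1)$ for $w\gg0$; as $\rk F=1$, $F$ is a torsion-free sheaf by \cite[Lemma 2.7]{BMS}. Because $Q$ is zero-dimensional, $\ch_{\le 2}(F)=\ch_{\le 2}\big(F_2(n)^{\vee}[1]\big)$, so $F$ has exactly the numerical and stability properties of $F_1$ used in Lemma \ref{large 1} and Proposition \ref{first}, and the latter applies to $F$ verbatim to give $\ch_3(F)\le\frac{2}{3}\ch_2(F).H\big(\!\ch_2(F).H-\frac{1}{2H^3}\big)$. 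Finally $\ch_3(F_2(n))=\ch_3\big(F_2(n)^{\vee}[1]\big)=\ch_3(F)-\ch_3(Q)\le\ch_3(F)$ because $\ch_3(Q)=\mathrm{length}(Q)\ge0$, and $\ch_2(F).H=-\ch_2(F_2(n)).H$ converts the bound into the stated one. So the one missing ingredient is precisely this ``derived dual $=$ torsion-free sheaf up to a zero-dimensional correction with non-negative $\ch_3$'' statement; everything else in your write-up survives.
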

   
 \begin{proof}
By Lemma \ref{large 2},  $F_2 \in \cA(b_2)$ is $\nu\_{b_2, w}$-semistable for $w \gg 0$. Thus $F_2(n)\in \cA(b_2+n) =\cA(-b_1)$ is $\nu_{-b_1, w}$-semistable for $w \gg 0$.
Therefore, by \cite[Lemma 5.1.3(b)]{BMT} the shifted derived dual $F_2(n)^{\vee}[1]$ lies in an exact triangle 
  \begin{equation*}
     F\ \Into\, F_2(n)^{\vee}[1]\, \To\hspace{-5.5mm}\To\, Q[-1],
  \end{equation*}
with $Q$ a zero-dimensional sheaf and $F$ a $\nu\_{b_1,w}$-semistable object of $\cA(b_1)$ for $w \gg 0$. Since $\rk F=1$ it is a torsion-free sheaf by \cite[Lemma 2.7]{BMS}. We also have $\ch_1(F).H^2 = \ch_1(F_2(n)).H^2 = 0$. Thus $F$ has all the  properties of $F_1$ used in Lemma \ref{large 1} and Proposition \ref{first}, so the latter gives 
    \begin{equation*}
   	\ch_3(F)\ \leq\ \frac{2}{3} \ch_2(F).H\left(\ch_2(F).H - \frac{1}{2H^3}\right)\!. 
   	\end{equation*}
  Since $\ch_2(F_2(n)).H = -\ch_2(F).H$ and $\ch_3(F_2(n)) =\ch_3(F_2(n)^\vee[1]) = \ch_3(F) - \ch_3(Q) \leq \ch_3(F)$ the claim follows.
 \end{proof}


\subsection*{Proof of Proposition \ref{prop.case ii}}
Set $c:=\ch_2(F_1).H\in\{0,-1,-2\}$, so by Proposition \ref{first}, 
\begin{equation}\label{part.1}
    \ch_3(F_1)\ \leq\ \frac{2c}{3}\left(c- \frac{1}{2H^3} \right). 
\end{equation}
Using $\ch_0(F_1) =1,\ \ch_1(F_1).H^2 = 0$ and the exact triangle $F_1\to\iota_* L \to F_2$ we compute
$$
\ch_1(F_2(n)).H^2\,=\,0, \quad \ch_2(F_2(n)).H\, =\, -c \quad\mathrm{and}\quad \ch_3(F_2(n))\, =\, -nc -\ch_3(F_1) + \frac{L^2}{2}\,.
$$ 
The inequality of Proposition \ref{second} therefore becomes
\begin{equation*}
-nc -\ch_3(F_1) + \frac{L^2}{2}\ \leq\ -\frac{2c}{3}\left(-c + \frac{1}{2H^3}\right).
\end{equation*}
Combined with \eqref{part.1} and our assumption $L^2>-2n+4$ this gives
$$
-n(c+1)+2\ <\ -nc+ \frac{L^2}{2}\ \le\ \ch_3(F_1)-\frac{2c}3\left(-c + \frac{1}{2H^3}\right)\ \le\ \frac{4c}3\left(c- \frac{1}{2H^3} \right).
$$
If $c=-1$ this gives the contradiction $2<\frac43+\frac2{3H^3}$. If $c=-2$ we get $n+2<\frac{16}3+\frac4{3H^3}<7$ but $n\ge10$. So $c=0. \hfill\square$

	\section{Curve counting}\label{Sdual}
The results of this paper are a special case of the results in \cite{FT}, which in turn builds on \cite{GST}. Consider 2-dimensional torsion sheaves of the form $\iota_*(L\otimes I_C)$, where $D\in|\cO(n)|$ and $I_C\subset\cO_D$ is the ideal sheaf of a subscheme of dimension $\le1$. We take $L.H=0$ and $n$ sufficiently large as in this paper; the main difference in \cite{FT} is that we allow nonempty $C$.

We show the moduli space of slope semistable sheaves in the class of $\iota_*(L\otimes I_C)$ is isomorphic to the product of $\mathrm{Pic}_{\mathrm{tors}}(X)$\footnote{Note this is \emph{not} the set of torsion line bundles, though it contains it of course.} -- the line bundles on $X$ with torsion $c_1$ -- and the moduli space of Joyce-Song pairs
\begin{equation}\label{JSpair}
\cO(-n)\xrightarrow{\ s\ }\cI_C.
\end{equation}
Here $\cI_C\subset\cO_X$ is an ideal sheaf on $X$ and $s\in H^0(\cO(n))$ is a nonzero section with zero divisor $D\supset C$.
The correspondence takes the cokernel of \eqref{JSpair} and tensors it with a line bundle $L$ with torsion $c_1(L)$ to get a sheaf of the form $\iota_*(L\otimes I_C)$.

For $n\gg0$ the moduli space of pairs \eqref{JSpair} is a projective bundle over the moduli space of ideal sheaves $\cI_C$. The fibre $\PP\big(H^0(\cI_C(n))\big)$ has Euler characteristic $\chi(\cI_C(n))$. Allowing $D$ to be in the linear system $|L’(n)| $ for any $L’$ with $c_1(L’)=0\in H^2(X,\mathbb Q) $ adds another $H^2(X,\mathbb Z)_{\mathrm{tors}}$ factor. If $X$ is a Calabi-Yau 3-fold with $H^1(\cO_X)=0$ this gives the relation
$$
\#(\text{2-dimensional sheaves})\,=\,(-1)^{\chi(\cI_C(n))-1}\big(\#H^2(X,\Z)_{\mathrm{tors}}\big)^2\chi(\cI_C(n))\cdot\#(\text{ideal sheaves}).
$$
The first term is a DT invariant counting Gieseker stable sheaves\footnote{We show that for $n\gg0$, slope semistability is equivalent to slope stability and to Gieseker stability.} with the same Chern character as $\iota_*(L\otimes I_C)$. The next terms are topological constants. The final term is the DT invariant counting ideal sheaves of the topological type of $\cI_C$.

The set of all of these DT invariants counting ideal sheaves is equivalent, by the MNOP conjecture \cite{MNOP} (proved for most Calabi-Yau 3-folds in \cite{PP}), to the set of Gromov-Witten invariants of $X$. The upshot is that the Gromov-Witten invariants of $X$ are governed by counts of 2-dimensional sheaves. In turn the generating series of the latter are conjectured by physicists to be mock modular forms due to \emph{S-duality}.

\appendix
\section{The case of $\PP^3$ \\ \emph{By Claire Voisin}} \label{app}

When $X=\PP^3$ we can prove a very similar result to \emph{\ref{B}} by more classical methods.

\begin{Thm} Let $D$ be a smooth surface of degree $n\ge4$ in $\PP^3$. Any nontrivial line bundle $L$ on $D$ with $c_1(L).H=0$ satisfies $L^2\le-2n+5$.
\end{Thm}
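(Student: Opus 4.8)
The plan is to run a classical space-curve argument: from $L$ one produces an effective curve on $D$, forces it to be non-degenerate in $\PP^3$, bounds its arithmetic genus by a Castelnuovo-type inequality, and reads that bound back through adjunction on $D$. For the preliminaries: since $H^1(\cO_D)=0$, $\mathrm{Pic}(D)=H^2(D,\Z)$ is torsion-free, so $L\ne0$ together with $c_1(L).H=0$ forces $c_1(L)\ne0$, and the Hodge index theorem gives $L^2<0$. By adjunction $K_D=\cO_D(n-4)$, hence $L.K_D=0$ and Riemann--Roch on $D$ reads $\chi(L(k))=1+\binom{n-1}{3}+\tfrac12 L^2+\tfrac{kn}{2}(k-n+4)$, while Serre duality gives $h^2(L(k))=h^0(L^{-1}(n-4-k))$. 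Any curve on $D$ has positive $H$-degree, so $h^0(\pm L(k))=0$ for $k\le0$; put $a:=\min\{k\ge1:h^0(L(k))\ne0\}$ and let $a^\vee$ be the analogue for $L^{-1}$, arranging $a\le a^\vee$ by interchanging $L\leftrightarrow L^{-1}$ if necessary.

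The decisive case is $a=1$. Take $0\ne C\in|L+H|$; as an effective Cartier divisor on the smooth surface $D$ it is a locally Cohen--Macaulay curve in $\PP^3$ of degree $C.H=n$, and it is non-degenerate — if $C$ lay in a plane $P$ then $C\le P\cap D$, and equal degrees would force $C=P\cap D\sim H$, i.e.\ $L\sim0$, contrary to hypothesis. Feeding this into the classical bound $p_a(C)\le\binom{d-2}{2}$ for a non-degenerate degree-$d$ curve in $\PP^3$ (sharp for a plane curve of degree $d-1$ meeting a line), with $d=n$ and adjunction $2p_a(C)-2=(L+H)^2+(L+H).K_D=L^2+n^2-3n$, gives $L^2\le-2n+4$, even slightly stronger than claimed (and matching \emph{\ref{B}}).

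For the remaining cases: if $a\ge2$ then $C\in|L+aH|$ has degree $an>n$, is still non-degenerate, and by minimality of $a$ lies on no surface of degree $<a$ (from $h^0(\cI_{C/\PP^3}(s))=h^0(L^{-1}(s-a))$ for $s<n$); combined with the fact that $C$ also lies on the smooth degree-$n$ surface $D$, the classical genus bounds for curves on low-degree surfaces overshoot $p_a(C)=1+\tfrac12(L^2+an(a+n-4))$ by far more than is needed, forcing $L^2\ll-2n+5$ — unless $a+a^\vee\ge n-2$, the one remaining possibility. In that case one can choose an integer $k$ with $n-4-a^\vee<k<a$, so that $h^0(L(k))=h^2(L(k))=0$ and hence $\chi(L(k))\le0$; substituting into Riemann--Roch, using $\binom{n-1}{3}\ge n-3$, and picking $k$ to minimise the nonnegative term $\tfrac{kn}{2}(n-4-k)$, gives $L^2\le-2n+5$ outright.

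The main obstacle is the genus estimate in the case $a=1$. The extremal configurations — a line together with a degree-$(n-1)$ plane curve, or a pair of disjoint lines as in \emph{\ref{B}} — are \emph{reducible}, so Castelnuovo's bound for \emph{irreducible} non-degenerate curves does not apply; one must instead invoke the version valid for arbitrary curves lying on a smooth surface (equivalently, curves with only planar singularities). Identifying and quoting the correct classical statement is the delicate point, and settling for a slightly weaker genus bound (or arguing less sharply when $a\ge2$) is what yields the constant $-2n+5$ in place of $-2n+4$.
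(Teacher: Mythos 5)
Your overall strategy (producing effective curves in $|L+aH|$ and bounding their arithmetic genus as space curves) is genuinely different from the paper's, which instead restricts to a plane section $C=H\cap D$, compares $h^0(L^{-1}(i))$ with $h^0(\cO_D(i))$ via the sequences $0\to L^{-1}(i-1)\to L^{-1}(i)\to L^{-1}_C(i)\to0$, and invokes Hartshorne's theorem on generalized divisors on Gorenstein curves to force $L|_C=\cO_C$ and hence $L=\cO_D$. Your case $a+a^\vee\ge n-2$ is correct and rather slick: any integer $k$ with $n-4-a^\vee<k<a$ gives $\chi(L(k))\le0$, and since $kn(n-4-k)\le\frac{n(n-4)^2}{4}\le 2\binom{n-1}{3}-2n+7$ for all $n\ge4$, Riemann--Roch alone yields $L^2\le-2n+5$ there. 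Your case $a=1$ also works \emph{provided} one grants the bound $p_a(C)\le\binom{d-2}{2}$ for non-degenerate locally Cohen--Macaulay curves of degree $d$ in $\PP^3$; this is a real theorem (Hartshorne, Martin-Deschamps--Perrin) but, as you acknowledge, it is not the naive Castelnuovo bound for integral curves and you do not establish or precisely locate it.

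The genuine gap is the remaining case $2\le a\le a^\vee$ with $a+a^\vee\le n-3$, where you appeal to ``classical genus bounds for curves on low-degree surfaces''. Three problems: (i) the relevant invariant is not ``no surface of degree $<a$'' --- for $s<n$ one has $h^0(\cI_{C/\PP^3}(s))=h^0(L^{-1}(s-a))$, which vanishes exactly for $s<a+a^\vee$, so minimality of $a$ is irrelevant (the twist involves $L^{-1}$, governed by $a^\vee$) and the degree that matters is $s=a+a^\vee$; (ii) the Gruson--Peskine/Harris bounds $G(d,s)$ are theorems about \emph{integral} curves, whereas $C\in|L+aH|$ is an arbitrary effective divisor on $D$, typically reducible and non-reduced, and the analogous bounds for locally CM curves not lying on surfaces of degree $<s$ are a hard, largely non-classical problem; (iii) even for integral curves the clean formula $\frac{d^2}{2s}+\frac{d(s-4)}{2}+1$ holds only for $d$ large relative to $s$ (roughly $d>s(s-1)$), which fails here since $d=an$ can be as small as $2n$ while $s=a+a^\vee$ can be as large as $n-3$. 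So the claim that the bounds ``overshoot by far more than is needed'' is unsubstantiated in precisely the one case your argument cannot otherwise reach, and the proof is incomplete there; the paper's route via restriction to $H\cap D$ avoids all of these space-curve subtleties.
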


\begin{proof} The $K3$ case $n=4$ is trivial: Riemann-Roch gives $h^0(L)+h^0(L^{-1})=h^1(L)+2+\frac{L^2}2$ so if $L$ is nontrivial with $L.H=0$ this gives $0=h^1(L)+2+\frac{L^2}2$ and so $L^2\le-4$.

So we can take $n\ge5$. By Riemann-Roch,
\begin{equation} \label{RR}
h^0(L)+h^0(K_D\otimes L^{-1})\ \ge\ \chi(L)\ =\ \chi(\cO_D)+\frac12 L^2-\frac12 K_D.L.
\end{equation}
We assume for a contradiction that $L$ is nontrivial and $L^2\ge-2n+6$. Using $K_D=\cO_D(n-4),\ L.H=0$ and $h^1(\cO_D)=0$, \eqref{RR} gives
\begin{equation} \label{1}
h^0(L^{-1}(n-4))\ \ge\ h^0(\cO_D(n-4))-(n-4).
\end{equation}
Let $C:=H\cap D$ be a smooth plane section and $L_C:=L|_C$. Then the exact sequences $0\to L^{-1}(i-1)\to L^{-1}(i)\to L^{-1}_C(i)\to0$  give
\begin{equation} \label{CC}
h^0\big(L^{-1}(i)\big)-h^0\big(L^{-1}(i-1)\big)\ \le\ h^0\big(L^{-1}_C(i)\big).
\end{equation}
Since $h^0(L^{-1})=0$, summing over $1\le i\le n-4$ gives
\begin{equation} \label{2}
h^0\big(L^{-1}(n-4)\big)\ \le\ \sum_{i=1}^{n-4}h^0\big(L^{-1}_C(i)\big).
\end{equation}
Replacing $L^{-1}$ by $\cO_D$ gives equality in \eqref{CC} for $1\le i\le n-4$ by Kodaira vanishing, and so
\begin{equation} \label{3.5}
h^0\big(\cO_D(n-4)\big)-1\ =\ \sum_{i=1}^{n-4}h^0\big(\cO_C(i)\big).
\end{equation}
Comparing \eqref{1}, \eqref{2} and \eqref{3.5} shows that $h^0\big(L^{-1}_C(i)\big)\ge h^0\big(\cO_C(i)\big)$ for some 
$1\le i\le n-4$. Since $\deg L_C=0$ this implies $L_C=\cO_C$ by \cite[Theorem 2.1, 2(b)]{Ha}.

By standard methods, this now implies the contradiction $L=\cO_D$. For instance, consider the blow up $\pi\colon\widehat D\to D$ of $D$ in the baselocus of a pencil of $C$s, giving a fibration $p\colon\widehat{D}\to\PP^1$. Then $\pi^*L$ is trivial on the fibres, so is the pullback from $\PP^1$ of the line bundle $p_*(\pi^*L)\cong\cO_{\PP^1}(d)$. Restricting $\pi^*L$ to (the proper transform of) another plane section (a multisection of $p$) and using $L.H=0$ shows that $d=0$.
\end{proof}

	\bibliographystyle{halphanum}
\bibliography{references}

\begin{thebibliography}{BMS16}
	

\bibitem[BMS16]{BMS} A. Bayer, E. Macr\`i and P. Stellari, \emph{The space of stability conditions on abelian threefolds, and on some Calabi-Yau threefolds}, Invent. Math. \textbf{206} (2016), 869--933. \arXiv{1410.1585}.

\bibitem[BMT14]{BMT} A. Bayer, E. Macr\`i and Y. Toda, \emph{Bridgeland stability conditions on threefolds I: Bogomolov-Gieseker type inequalities}, Jour. Alg. Geom. \textbf{23} (2014), 117--163.
\arXiv{1103.5010}.

\bibitem[Br08]{Br} T. Bridgeland, \emph{Stability conditions on $K3$ surfaces}, Duke Math. Jour. \textbf{141} (2008), 241--291. \arXiv{0307164}.

\bibitem[Br07]{Br.stbaility} T. Bridgeland, \emph{Stability conditions on triangulated categories}, Ann. of Math. \textbf{166} (2007), 317--345. \arXiv{0212237}.

\bibitem[FT20]{FT} S. Feyzbakhsh and R. P. Thomas, \emph{Curve counting and S-duality}, preprint.

\bibitem[GST14]{GST} A. Gholampour, A. Sheshmani and R. P. Thomas, \emph{Counting curves on surfaces in Calabi-Yau 3-folds}, Math. Ann. \textbf{360}, 67--78, 2014. \arXiv{1309.0051}.

\bibitem[Ha86]{Ha} R. Hartshorne, \emph{Generalized divisors on Gorenstein curves and a theorem of Noether}, J. Math. Kyoto Univ \textbf{26} (1986), 375--386.

\bibitem[Ko18a]{KosekiAb} N. Koseki, \emph{Stability conditions on product threefolds of projective spaces and Abelian varieties}, Bull. LMS \textbf{50} (2018), 229--244. \arXiv{1703.07042}.

\bibitem[Ko18b]{Kosekinef} N. Koseki, \emph{Stability conditions on threefolds with nef tangent bundles}, \arXiv{1811.03267}.

\bibitem[Li19a]{Li.Fano} C. Li, Stability conditions on Fano threefolds of Picard number one, J. Eur. Math. Soc. \textbf{21} (2019), 709--726. \arXiv{1510.04089}.

\bibitem[Li19b]{Li} C. Li, \emph{On stability conditions for the quintic threefold}, Invent. Math. \textbf{218} (2019), 301--340. \arXiv{1810.03434}.

\bibitem[MP16]{MP} A. Maciocia and D. Piyaratne, \emph{Fourier-Mukai Transforms and Bridgeland Stability Conditions on Abelian Threefolds II}, Int. Jour. of Math. \textbf{27} (2016), 1650007. \arXiv{1310.0299}.

\bibitem[Ma14]{Ma} E. Macr\`i, \emph{A generalized Bogomolov-Gieseker inequality for the three-dimensional projective space}, Algebra \& Number Theory \textbf{8} (2014), 173--190.
\arXiv{1207.4980}.

\bibitem[MNOP]{MNOP}
D.~Maulik, N.~Nekrasov, A.~Okounkov, and R.~Pandharipande.
\newblock {\em Gromov-{W}itten theory and {D}onaldson-{T}homas theory, {I}},
  Compos. Math., {\bf 142} (2006), 1263--1285.
\newblock \mathAG{0312059}.

\bibitem[PP17]{PP} R. Pandharipande and A. Pixton, {\em Gromov-Witten/Pairs
correspondence for the quintic 3-fold}, Jour. AMS \textbf{30} (2017), 389--449.

\bibitem[Sc17]{Sc} B. Schmidt, \emph{Counterexample to the generalized Bogomolov-Gieseker inequality for threefolds}, Int. Math. Res. Notices \textbf{2017} (2017), 2562--2566.
\arXiv{1602.05055}.

\bibitem[Sc14]{ScQ} B. Schmidt, \emph{A generalized Bogomolov-Gieseker inequality for the smooth quadric threefold}, Bull. LMS \textbf{46} (2014), 915--923. \arXiv{1309.4265}.

\bibitem[To12]{TodaBG} Y. Toda. \emph{Bogomolov-Gieseker-type inequality and counting invariants}, Jour. of Topol. \textbf{6} (2012), 217--250. \arXiv{1112.3411}.


\bibitem[Vo07]{Voi} C. Voisin, \emph{Some aspects of the Hodge conjecture}, Japanese Jour. of Math. \textbf{2} (2007), 261--296.	
	
\end{thebibliography}

\bigskip \noindent {\tt{s.feyzbakhsh@imperial.ac.uk\\ richard.thomas@imperial.ac.uk}}

\noindent Department of Mathematics\\
\noindent Imperial College\\
\noindent London SW7 2AZ \\
\noindent United Kingdom 
\bigskip

\noindent {\tt{claire.voisin@imj-prg.fr}}\\
\noindent Coll\` ege de France\\
\noindent 3 rue d'Ulm\\
\noindent 75005 Paris\\
\noindent France

\end{document}